\newtheorem{theorem}{Theorem}[section]
\newtheorem{proposition}[theorem]{Proposition}
\newtheorem{corollary}[theorem]{Corollary}
\theoremstyle{definition}
\newtheorem{definition}[theorem]{Definition}
\newtheorem{example}[theorem]{Example}
\newtheorem{remark}[theorem]{Remark}
\newtheorem{hypothesis}[theorem]{Hypothesis}
\newcommand\lm{\lambda}
\newcommand\al{\alpha}
\newcommand\be{\beta}
\newcommand\gm{\gamma}
\newcommand{\rd}{{\,\rm d}}
\newcommand{\e}{{\rm e}}
\newcommand{\N}{{\mathbb N}}
\newcommand{\R}{{\mathbb R}}
\newcommand{\C}{{\mathbb C}}
\newcommand\eps{\varepsilon}
\newcommand\beq{\begin{equation}}
\newcommand\eeq{\end{equation}}
\newcommand{\dom}{\mathcal{D}}
\newcommand{\dist}{\mathrm{dist}}
\newcommand\re{\mathrm{Re}}
\newcommand\im{\mathrm{Im}}
\newcommand\I{\mathrm{i}}
\newcommand{\beqnt}{\begin{equation*}}
\newcommand{\eeqnt}{\end{equation*}}
\newcommand{\set}[2]{\{#1 : #2 \}}
\newcommand{\Set}[2]{\Big\{  #1 : #2 \Big\}  }
\newcommand{\SSet}[2]{\left\{  #1 : #2 \right\}  }
\renewcommand\H{\mathcal{H}}
\newcommand\K{\mathcal{K}}
\newcommand\A{\mathcal{A}}
\newcommand{\slim}{\operatorname{s\,-}\lim}
\newcommand{\LL}{\operatorname{L}}
\newcommand{\sgn}{\operatorname{sgn}}
\newcommand\rref[1]{{\rm \ref{#1}}}
\DeclareMathOperator{\Tr}{Tr}
\DeclareMathOperator{\tr}{tr}
\newcommand{\dotcup}{\ensuremath{\mathaccent\cdot\cup}}
\newcommand{\mycircle}[1]{
\psaxes(0,0)(-5,-2.5)(5,2.5)
\psframe[fillstyle=solid,linecolor=red,fillcolor=red](-5,-.05)(-1,.05)
\psframe[fillstyle=solid,linecolor=red,fillcolor=red](5,-.05)(1,.05)
\uput[u](-4.2,0){$\textcolor{red}{\sigma_{\rm e}(H)}$}
\uput[u](4.2,0){$\textcolor{red}{\sigma_{\rm e}(H)}$}
\parametricplot[plotpoints=200,%
plotstyle=curve]%
{0}{360}
{
#1 #1 mul #1 #1 mul mul #1 #1 mul 2 mul sub 2 add 1 #1 #1 mul sub 4 mul div 0.5 add sqrt #1 #1 mul #1 #1 mul mul #1 #1 mul 2 mul sub 2 add 1 #1 #1 mul sub 4 mul div 0.5 sub sqrt t sin mul add
#1 #1 mul #1 #1 mul mul #1 #1 mul 2 mul sub 2 add 1 #1 #1 mul sub 4 mul div 0.5 sub sqrt t cos mul 
}
\parametricplot[plotpoints=200,%
plotstyle=curve]%
{0}{360}
{
#1 #1 mul #1 #1 mul mul #1 #1 mul 2 mul sub 2 add 1 #1 #1 mul sub 4 mul div 0.5 sub sqrt t sin mul #1 #1 mul #1 #1 mul mul #1 #1 mul 2 mul sub 2 add 1 #1 #1 mul sub 4 mul div 0.5 add sqrt sub
#1 #1 mul #1 #1 mul mul #1 #1 mul 2 mul sub 2 add 1 #1 #1 mul sub 4 mul div 0.5 sub sqrt t cos mul 
}
}
\begin{document}

\title[Eigenvalue estimates for non-selfadjoint Dirac operators]
{Eigenvalue estimates for non-selfadjoint Dirac operators on the real line}

\author{Jean-Claude Cuenin}
\address{Department of Mathematics, Imperial College London, London SW7 2AZ, UK}
\email{j.cuenin@imperial.ac.uk}

\author{Ari Laptev}
\address{Department of Mathematics, Imperial College London, London SW7 2AZ, UK}
\email{a.laptev@imperial.ac.uk}

\author{Christiane Tretter}
\address{Mathematisches Institut, Universit\"at Bern, Sidlerstr.\ 5, 3012 Bern, Switzerland}
\email{tretter@math.unibe.ch}


\begin{abstract}
We show that the non-embedded eigenvalues of the Dirac operator on the real line with non-Hermitian potential $V$
lie in the disjoint union of two disks in the right and left half plane, respectively, provided that the 
$L^1\mbox{-norm}$ of $V$ is bounded from above by the speed of light times the reduced Planck constant.
An analogous result for the Schr\"odinger operator, originally proved by Abramov, Aslanyan and Davies, emerges in the nonrelativistic limit. 
For massless Dirac operators, the condition on $V$ implies the absence of nonreal eigenvalues. Our results are further generalized to potentials with slower decay at infinity.
As an application, we determine bounds on resonances and embedded eigenvalues of Dirac operators with Hermitian dilation-analytic potentials.
\end{abstract}

\maketitle


\section{Introduction} 

There has been an increasing interest in the spectral theory of non-selfadjoint differential operators during the past years. In particular, eigenvalue estimates for Schr\"odinger operators with complex potentials have recently been investigated by various authors, \cite{AAD01,DaNa02,FrLaLiSe06,LaSa09,Sa10,Frank10}. Corresponding results for non-selfadjoint Dirac operators are much more sparse, \cite{Syroid83, Syroid87}, although operators of this type arise for example as Lax operators in the focusing nonlinear Schr\"odinger equation \cite{RGHL04}. 

In this paper we derive the first eigenvalue enclosures for Dirac operators with non-Hermitian potentials. We consider one-dimensional Dirac operators $H\!=\!H_0+V$ in $L^2(\R)\otimes \C^2$, where the free Dirac operator is of the form
\begin{align}\label{eq. Dirac op.}
H_0=-\I c\hbar \,\frac{\rd}{\rd x}\,\sigma_1+m c^2\,\sigma_3,\quad 
\sigma_1:=\begin{pmatrix}0&1\\ 1&0\end{pmatrix},\quad
\sigma_3:=\begin{pmatrix}1&0\\ 0&-1\end{pmatrix}
\end{align}
with $c$ denoting the speed of light, $\hbar$ the reduced Planck constant, $m\geq 0$ the particle mass 
and where $V$ is a $2\times 2$ matrix-valued function with entries in $L^1(\R)$. 
Since we do not assume $V(x)$ to be Hermitian, the operator $H$ is not selfadjoint, in general.  
Moreover, already the free Dirac operator $H_0$ is not bounded from below, with
purely absolutely continuous
$
\sigma(H_0)=(-\infty,-mc^2]\cup [mc^2,\infty)
$.

In our main result, Theorem \ref{thm. 1}, we prove that if the potential $V$ satisfies 
\beq\label{eq. assumption on V trace}
\|V\|_1 := \int_{\R}\|V(x)\|\, \rd x <\hbar c,
\eeq
where $\|V(x)\|$ is the operator norm of $V(x)$ in $\C^2$ with Euclidean norm, 
%
%
%
then the non-embedded eigenvalues of $H$ lie in the union of two disjoint disks, 
\beq\label{eq. spec. inclusion intro}
\sigma_{\rm d}(H)\subset K_{mr_0}(mx_0)\,\dotcup \,K_{mr_0}(-mx_0),
\eeq
in the right and left half plane; the radii $mr_0$, as well as the points $mx_0$ determining the centres, diverge to $\infty$ as $\|V\|_1\to \hbar c$. In particular, our theorem implies that the massless Dirac operator (i.e.\ $m=0$ in \eqref{eq. Dirac op.}) with non-Hermitian potential $V$ has no complex eigenvalues at all since in this case $mr_0=0$. 

The second main result of this paper is an enclosure for resonances of Dirac operators with Hermitian potentials under some analyticity assumptions on~$V$. While the literature on the theory of resonances of Schr\"odinger operators is vast, see e.g.\ \cite{SZ07}, \cite{Z02}
and the references therein, much less is known for the Dirac operator; we only mention \cite{Seba88} where the complex scaling method was employed. We use the interplay of this method with Theorem \ref{thm. 1} for the scaled Dirac operators $H_\theta$ to describe a region in the complex plane where the uncovered resonances may lie in terms of $L_1$-norms of the scaled potentials $V(\e^{\I\theta} \cdot)$. Moreover, for the massless Dirac operator, we show that there are no resonances near the real axis.

Further results concern the sharpness of our eigenvalue enclosures and generalizations to more slowly decaying potentials. 
Finally, in the non-relativistic limit ($c\to\infty$), our main result reproduces \cite[Theorem 4]{AAD01} for the one-dimensional Schr\"odinger operator
 \beq\label{eq. Schroedinger op.}
 -\frac{\hbar^2}{2m}\frac{\rd^2}{\rd x^2}+V
 \eeq
in $L^2(\R)$ with complex-valued potential $V\in L^1(\R)$
the eigenvalues $\lm\in\C\setminus [0,\infty)$ of which lie in a disk around the origin:
 \beq\label{eq. disk Schroedinger}
\frac{\hbar^2}{2m}|\lm|\leq \frac{1}{4}\left(\int_{\R}|V(x)|\rd x\right)^2.
 \eeq

Our proofs are based on the so-called Birman-Schwinger principle. 
Although the latter is not bound to one dimension, the generalization to higher dimensions poses a major challenge; the reason for this is the intrinsically different behaviour of the resolvent kernel of $H_0$ which already in the case of Schr\"odinger operators requires  sophisticated analytical estimates \cite{Frank10}.

The outline of the paper is as follows. In Theorem \ref{thm. 1} of Section \ref{section Integrable potentials}, we prove the enclosure \eqref{eq. spec. inclusion intro} and show that, for $m\neq 0$, the eigenvalue bound \eqref{eq. disk Schroedinger} for the Schr\"odinger operator emerges in the nonrelativistic limit ($c\to\infty$). One of the main, new, ingredients in the proof of Theorem \ref{thm. 1} is the use of a M\"obius transformation of the spectral parameter to localize the eigenvalues.

In Section \ref{section sharpness}, we demonstrate the sharpness of Theorem \ref{thm. 1} by considering a family of delta-potentials. 
Moreover, we show that assumption \eqref{eq. assumption on V trace} may be weakened if the potential has additional structure such as being purely imaginary.

In Section \ref{section Slowly decaying potentials}, we extend Theorem \ref{thm. 1} to potentials with slower decay at infinity; in this case  \eqref{eq. assumption on V trace} has to be replaced by more complicated conditions. From this we derive eigenvalue estimates in terms of higher $L^p$-norms of $V$, see Corollary \ref{cor. higher Lp}. We also prove that, if $p\in[2,\infty]$ and an additional smallness assumption holds, then $H$ is similar to a block-diagonal matrix operator, see Theorem \ref{thm. blockdiag}.

In Section \ref{section Embedded eigenvalues and resonances}, we establish enclosures for resonances and embedded eigenvalues of $H$ with Hermitian $V(x)$. For this purpose, we use the well-known method of complex scaling where resonances are characterized as eigenvalues of non-selfadjoint operators and apply Theorem \ref{thm. 1} to the scaled Dirac operators $H_\theta$. To this end, a careful analysis of the dependence of the corresponding balls $K_{mr_\theta}(\pm mx_\theta)$ on the scaling angle $\theta$ is required.  

To avoid overly technical discussions, we prove all results in Sections \ref{section Integrable potentials}--5 for the case 
of bounded $V$, i.e.\ $V_{ij}\in L^{\infty}(\R)$, $i,j=1,2$; 
it will be evident, however, that the boundedness does not play an essential role, and we will show in Section \ref{section Construction of the perturbed operator} how to dispense with it. 

The following notation will be used throughout this paper. For $z_0\in\C$ and $r>0$, let $K_{r}(z_0)$ be the closed disk centred at $z_0$ with radius~$r$; for $r=0$, we use the convention that $K_r(z_0)=\emptyset$. For a closed densely defined linear operator $T:\H\to\H$ on a Hilbert space $\H$, we denote by $\dom(T)$, $\ker(T)$, $\rho(T)$, $\sigma(T)$, $\sigma_{\rm p}(T)$ its domain, kernel, resolvent set, spectrum, and set of eigenvalues, respectively. 
%
%
Let $\LL(\H)$ denote the algebra of bounded linear operators with domain equal to $\H$ and by $\|\cdot\|$ the operator norm on $\LL(\H)$; the norm on the ideal of Hilbert-Schmidt operators is denoted by $\|\cdot\|_{\rm HS}$. The identity operator on $\H$ is denoted by $I_{\H}$. We shall use the abbreviation $T-z$ for the operator $T-z\,I_{\H}$, $z\in\C$. Throughout Sections \ref{section Integrable potentials}--\ref{section Embedded eigenvalues and resonances} we work in the Hilbert space $\H=L^2(\R)\otimes \C^2$. By $\tr$ we denote the trace in this Hilbert space, while $\Tr$ is the trace in $\C^2$. By abuse of notation, we shall denote integral operators on $\H$ and their kernels by the same symbol. For example, we write $R_0(z)=(H_0-z)^{-1}$ for the resolvent of the free Dirac operator $H_0$ and $R_0(x,y;z)$ for its resolvent kernel. 
For a measurable matrix-valued function $V=(V_{ij})_{i,j=1}^2$ 
we shall always identify the function $V$ with the closed maximal multiplication operator in $L^2(\R)\otimes \C^2$.

The potentials $V$ we consider are supposed to decay at infinity, 
\[
\lim_{|x|\to\infty}V_{ij}(x)\to 0,\quad |x|\to \infty.
\]
It is well known that the essential spectrum of $H_0$ is stable under such perturbations,
\beq\label{eq. invariance essential spectrum}
\sigma_{\rm e}(H)=\sigma_{\rm e}(H_0)=(-\infty,-mc^2]\cup [mc^2,\infty), 
\eeq
see e.g.\ \cite[4.3.4, Remark 2]{Th}. Note that there are at least five different notions of essential spectrum for a non-selfadjoint closed operator $T$; here we use the following one:
\begin{align*}
\sigma_{\rm e}(T):=\set{z\in\C}{T-z \mbox{ is not a Fredholm operator}}.
\end{align*}
With this definition of the essential spectrum, it follows from \cite[Theorem IX.2.4]{EE} that \cite[4.3.4, Remark 2]{Th}, which is only stated for Hermitian-valued potentials, still holds true in the non-Hermitian case.
The discrete spectrum of $T$ is defined as
\[
\sigma_{\rm d}(T):=\set{z\in\C}{z\mbox{ is an isolated eigenvalue of $T$ of finite multiplicity}}.
\]
Note that, if $T$ is not selfadjoint, then, in general, $\sigma(T)$ is not the disjoint union of $\sigma_{\rm e}(T)$ and $\sigma_{\rm d}(T)$. However, for the Dirac operators $H=H_0+V$ considered here, $\C\setminus\sigma_{\rm e}(H_0)=\rho(H_0)$ has either one or two (for $m=0$) connected components, each of which contains points of $\rho(H)$. Hence \cite[Theorem XVII.2.1]{GGK1} implies that 
\beq\label{eq. complement of essential spectrum is discrete}
\sigma(H)\setminus\sigma_{\rm e}(H)=\sigma_{\rm d}(H).
\eeq
%

For simplicity, we will use units where $\hbar=c=1$ from now on. The correct values in other units may simply be restored by dimensional analysis.

\section{Integrable potentials}\label{section Integrable potentials}

In this section we derive sharp bounds on the eigenvalues of the perturbed Dirac operator $H$ in \eqref{eq. Dirac op.}, with potential $V=(V_{ij})_{i,j=1}^2$, $V_{ij}\in L^1(\R)$.
For eigenvalue bounds in terms of higher $L^p$-norms see Corollary \ref{cor. higher Lp} as well as the forthcoming paper  \cite{CueTre12}. 

\begin{theorem}\label{thm. 1}
Let $V=(V_{ij})_{i,j=1}^2$ with $V_{ij}\in L^1(\R)$ for $i,j=1,2$ be such that
\beq\label{eq. v less than c}
\|V\|_1 <1.
\eeq
Then
\beq\label{eq. spectrum in two disks}
\sigma_{\rm d}(H)\subset K_{mr_0}(mx_0)\,\dotcup \,K_{mr_0}(-mx_0),
\eeq
where
\beq\label{eq. x0 r0}
x_0:=\sqrt{\frac{\|V\|_1^4-2 \|V\|_1^2+2}{4(1-\|V\|_1^2)}+\frac{1}{2}}, \quad r_0:=\sqrt{\frac{\|V\|_1^4-2 \|V\|_1^2+2}{4(1-\|V\|_1^2)}-\frac{1}{2}};
\eeq
in particular, the spectrum of the massless Dirac operator {\rm(}$m=0${\rm)} with non-Hermitian potential $V$ is $\R$. 
\end{theorem}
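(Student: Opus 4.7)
The plan is to apply the Birman--Schwinger principle, bound the resulting Birman--Schwinger operator in Hilbert--Schmidt norm using the explicit kernel of $R_0(z)$, and translate the resulting inequality into two Apollonius disks via a M\"obius transformation of the spectral parameter.

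First I would use the pointwise polar factorization $V(x) = U(x)|V(x)|$ with $\|U(x)\|\le 1$, and set $A := U|V|^{1/2}$, $B := |V|^{1/2}$, so that $V = AB$. Since $V$ is bounded and $H_0-z$ is boundedly invertible on $\rho(H_0)$, the Birman--Schwinger principle gives: each $z\in\sigma_{\rm d}(H)\subset\rho(H_0)$ forces $-1$ into the spectrum of the compact integral operator $K(z) := B R_0(z) A$, hence $\|K(z)\|\ge 1$. Second, using $H_0^2 = (-\partial_x^2+m^2)\,I_{\C^2}$ and the scalar Green's function $\tfrac{\I}{2k}\e^{\I k|x-y|}$, with $k := \sqrt{z^2-m^2}$ chosen so that $\im k > 0$ on $\rho(H_0)$, the identity $R_0(z) = (H_0+z)(-\partial_x^2+m^2-z^2)^{-1}$ yields
$$
R_0(x,y;z) = \left[\frac{\I(z+m\sigma_3)}{2k} + \frac{\sgn(x-y)\,\sigma_1}{2}\right]\e^{\I k|x-y|},
$$
and an entrywise computation gives $\Tr\bigl(R_0(x,y;z)^*R_0(x,y;z)\bigr) = \bigl[\tfrac{|z|^2+m^2}{2|k|^2}+\tfrac12\bigr]\e^{-2\im k|x-y|}$. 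Combining the pointwise three-factor bound $\Tr((M_1M_2M_3)^*(M_1M_2M_3))\le \|M_1\|^2\Tr(M_2^*M_2)\|M_3\|^2$, the identities $\||V|^{1/2}\|^2 = \|V\|$, $\|U\|\le 1$, and $\e^{-2\im k|x-y|}\le 1$, together with Fubini, produces the key estimate
$$
\|K(z)\|^2 \;\le\; \|K(z)\|_{\rm HS}^2 \;\le\; \left[\frac{|z|^2+m^2}{2|k|^2}+\frac12\right]\|V\|_1^2.
$$

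The final step is to convert the necessary condition $\|K(z)\|\ge 1$ into the explicit disk inclusion. Setting $\rho_\pm := |z\mp m|$ one has $|k|^2 = \rho_+\rho_-$ and $|z|^2+m^2 = \tfrac12(\rho_+^2+\rho_-^2)$, so the condition becomes the quadratic inequality
$$
\|V\|_1^2\,t^2 \;-\; 2(2-\|V\|_1^2)\,t \;+\; \|V\|_1^2 \;\ge\; 0, \qquad t := \rho_+/\rho_-,
$$
whose roots $\lambda_\pm = \bigl[(2-\|V\|_1^2)\pm 2\sqrt{1-\|V\|_1^2}\bigr]/\|V\|_1^2$ satisfy $\lambda_+\lambda_- = 1$. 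Hence $z$ must satisfy $|z-m|/|z+m|\ge\lambda_+$ or $|z-m|/|z+m|\le 1/\lambda_+$, each defining an Apollonius region about $\mp m$. The M\"obius transformation $w = (z-m)/(z+m)$ sends these to the annulus complement $\{|w|\ge\lambda_+\}\cup\{|w|\le 1/\lambda_+\}$, which is interchanged by $z\mapsto -z$; completing the square in $z$ then reproduces exactly the disks $K_{mr_0}(\pm mx_0)$ with $x_0, r_0$ as in \eqref{eq. x0 r0}. The massless case $m=0$ is immediate: $|k|=|z|$ collapses the bound to $\|K(z)\|_{\rm HS}\le\|V\|_1 < 1$ on $\rho(H_0) = \C\setminus\R$, so $\sigma_{\rm d}(H) = \emptyset$; combined with $\sigma_{\rm e}(H) = \R$ this yields $\sigma(H) = \R$.

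The genuinely new step is the M\"obius repackaging in the last paragraph, but once the sharp HS inequality is available it is a direct algebraic manipulation. The main technical obstacle I anticipate is the matrix/operator bookkeeping of the first two steps: realising the pointwise polar factorization as a bounded operator identity on $L^2(\R)\otimes\C^2$ for a non-Hermitian $V$, and verifying the three-factor HS inequality in the $2\times 2$ matrix norm uniformly in $(x,y)$, so that the integration against $\|V(x)\|\,\|V(y)\|$ cleanly produces the single factor $\|V\|_1^2$ that controls everything.
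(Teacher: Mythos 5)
Your proposal is correct and follows essentially the same route as the paper: the Birman--Schwinger reduction to $Q(z)=|V|^{1/2}R_0(z)U|V|^{1/2}$, the pointwise bound $\|M(x,y;z)\|_{\rm HS}=\eta(|\Phi(z)|)$ on the matrix part of the resolvent kernel, the resulting estimate $\|Q(z)\|\leq\eta(|\Phi(z)|)\|V\|_1$, and the M\"obius-transformation repackaging of $|(z-m)/(z+m)|$ to obtain the two disks. The only (inessential) difference is that you dominate $\|Q(z)\|$ by its Hilbert--Schmidt norm, whereas the paper applies Cauchy--Schwarz directly to the sesquilinear form $(Q(z)f,g)$ to get the operator-norm bound; both yield identical constants, and the remaining algebra (the quadratic in $t=|z-m|/|z+m|$ with $\lambda_+=\rho^2$ and $\lambda_+\lambda_-=1$) matches the paper's $\Phi^{-1}$-image computation exactly. (A cosmetic slip: the off-diagonal term in your kernel formula is $\tfrac{\I}{2}\sgn(x-y)\sigma_1$, not $\tfrac12\sgn(x-y)\sigma_1$, though this has no effect on $\Tr(R_0^*R_0)$.)
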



\begin{figure}[ht]
\begin{pspicture}(-6,-2.5)(6,2.5)
\psaxes(0,0)(-5,-2.5)(5,2.5)
\mycircle{0.8}
\mycircle{0.96}
\mycircle{0.978}
\end{pspicture}
\caption{The two disks of Theorem \ref{thm. 1} for three different values of $\|V\|_1\in(0,1)$ and $m=1$; the imaginary axis always remains free of eigenvalues.}
\end{figure}
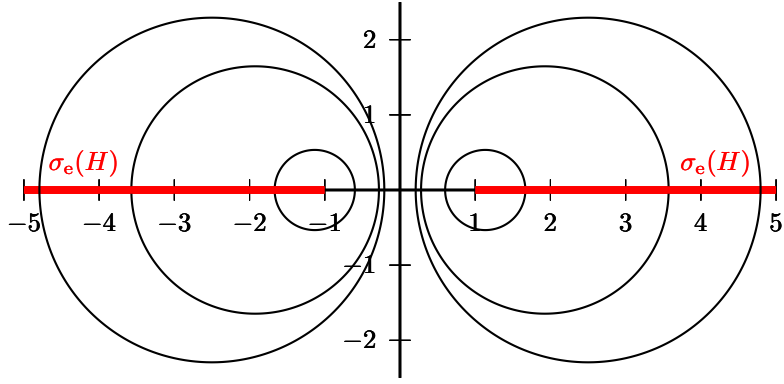

\begin{proof}
In this section we prove Theorem \rref{thm. 1} under the assumption that $V$ is bounded in which case $H=H_0+V$ is a closed operator. The only additional obstruction in the general case is the construction of a closed extension $H$ of $H_0+V$, a technical point which we postpone to Section \ref{section Construction of the perturbed operator}.

The proof of Theorem \rref{thm. 1} is based on the Birman-Schwinger principle: Let $U$ be the partial isometry in the polar decomposition of $V=U|V|$. We shall factorize $V$ according to 
\beq\label{eq. standard factorization}
V=BA,\quad B:=U|V|^{1/2},\quad A:=|V|^{1/2}.
\eeq
We denote by $R_0(\cdot)$ the resolvent of $H_0$, i.e.\
\begin{align*}
R_0(z)&:=(H_0-z)^{-1},\quad z\in\rho(H_0).
\end{align*}
Let $z\in\rho(H_0)$. It is easy to verify that $z$ is an eigenvalue of $H$ if and only if $-1$ is an eigenvalue of $VR_0(r)$. Since the nonzero eigenvalues of $BAR_0(z)$ and $AR_0(z)B$ are the same, this is thus equivalent to $-1$ being an eigenvalue of the operator
\beq\label{eq. def. Q}
Q(z):=AR_0(z)B:\H\to\H,\quad z\in\rho(H_0).
\eeq
Hence, if $z$ is an eigenvalue of $H$, then $\|Q(z)\|\geq 1$. On the other hand, since the spectrum of $H$ in the complement of $\sigma_{\rm e}(H_0)$ is discrete by \eqref{eq. invariance essential spectrum} and \eqref{eq. complement of essential spectrum is discrete}, $z\in\rho(H)$ whenever $\|Q(z)\|<1$. 

It is well-known that the resolvent kernel of the free Dirac operator is given by
\begin{align*}
 R_0(x,y;z)
=M(x,y;z)\,\e^{\I k(z)|x-y|},\quad M(x,y;z):=\frac{\I}{2}\begin{pmatrix}\zeta(z)&\sgn(x-y)\\ \sgn(x-y) &\zeta(z)^{-1}\end{pmatrix},
\end{align*}
where
\beq\label{zeta, k}
 \zeta(z):=\frac{z+m}{k(z)},\quad  k(z):=\sqrt{z^2-m^2},\quad z\in \rho(H_0),
\eeq
and the branch of the square root on $\C\setminus [0,\infty)$ is chosen such that $\im\, k(z)>0$. We set 
\beq\label{Phi, eta}
\begin{split}
\Phi(z)&:=\zeta(z)^2=\frac{z+m}{z-m}\in\C\setminus[0,\infty),\quad z\in\rho(H_0),\\
\eta(s)&:=\sqrt{\frac{1}{2}+\frac{1}{4}\left(s+s^{-1}\right)},\quad s>0.
\end{split}
\eeq
Observing that
\[
\|M(x,y;z)\|=\|M(x,y;z)\|_{\rm HS}=\eta(|\Phi(z)|),
\]
we obtain that for $z\in \rho(H_0)$, $f,g\in \H$,
\begin{align}
|(AR_0(z)Bf,g)|&\leq \eta(|\Phi(z)|)\, \int_{\R}\int_{\R} \|A(x)\| \, \|B(y)\| \, \|f(y)\|_{\C^2} \|g(x)\|_{\C^2}\rd x\rd y\notag\\
&\leq \eta(|\Phi(z)|) \left(\int_{\R} \|A(x)\|^2 \rd x\right)^{1/2}\|g\|_{\H} \left(\int_{\R} \|B(y)\|^2 \rd y\right)^{1/2}\|f\|_{\H}\label{eq. Cauchy-Schwarz AB}\\
&= \eta(|\Phi(z)|)\, \left(\int_{\R} \|V(x)\| \rd x\right) \, \|g\|_{\H} \,\|f\|_{\H}\notag.
\end{align}
Here, we used $\exp(-\im\,k(z)\,|x-y|)\leq 1$ in the first line, the Cauchy-Schwarz inequality in the second line, and the equality
\begin{align*}
&\|B(x)\|=\|A(x)\|=\|\,|V(x)|^{1/2}\|=\|V(x)\|^{1/2},\quad x\in\R,
\end{align*}
in the last line. It follows that
\begin{align}\label{eq. Qleqmv}
\|Q(z)\|\leq \eta(|\Phi(z)|)\, \|V\|_1. 
\end{align}
Hence, $\|Q(z)\|<1$ whenever
\beq\label{eq. w and rho}
w:=\Phi(z)\in B_{\rho^2,\rho^{-2}}:=\set{w\in\C}{\rho^{-2}<|w|<\rho^2},\quad \rho:=\frac{1+\sqrt{1-\|V\|_1^2}}{\|V\|_1}.
\eeq
Observing that $\Phi$ is a M\"obius transformation for $m\neq 0$ with inverse
\[
z=\Phi^{-1}(w)=m\,\frac{w+1}{w-1}, 
\]
we see that the complement of the annulus $B_{\rho^2,\rho^{-2}}$ in the $w$-plane is mapped onto the union of the disks $K_{mr_0}(mx_0)$ and $K_{mr_0}(-mx_0)$ in the $z$-plane.
Indeed, $\Phi^{-1}$ maps (generalized) circles to (generalized) circles, and, by virtue of the equality
\[
\overline{\Phi^{-1}(w)}=\Phi^{-1}(\overline{w}),\quad w\in\C\cup\{\infty\},
\]
the image of a circle with centre at the origin is symmetric with respect to the real axis. The outer boundary of $B_{\rho^2,\rho^{-2}}$ is mapped to the circle with centre $mx_0$ and radius $mr_0$ given by
\[
x_0=\frac{1}{2}\left(\frac{\rho^2+1}{\rho^2-1}+\frac{-\rho^2+1}{-\rho^2-1}\right)=\frac{\rho^4+1}{\rho^4-1},\quad r_0=\frac{1}{2}\left(\frac{\rho^2+1}{\rho^2-1}-\frac{-\rho^2+1}{-\rho^2-1}\right)=\sqrt{x_0^2-1}.
\]
On the other hand, since
\[
\Phi^{-1}(w^{-1})=-\Phi^{-1}(w),\quad w\in\C\cup\{\infty\},
\]
the inner boundary of $B_{\rho^2,\rho^{-2}}$ is mapped to the circle with centre $-mx_0$ and radius~$mr_0$.
Since $\Phi^{-1}$ is biholomorphic and $\C\setminus\left(B_{\rho^2,\rho^{-2}}\right)$ is doubly connected, its image must be too, so it fills the regions inside the two circles. Observing that 
\[
\frac{\rho^4+1}{\rho^4-1}=\sqrt{\frac{\|V\|_1^4-2 \|V\|_1^2+2}{4(1-\|V\|_1^2)}+\frac{1}{2}},
\]
the spectral inclusion \eqref{eq. spectrum in two disks} is proved for the case $m\neq 0$. If $m=0$, then $\Phi(z)=1$ and $\eta(|\Phi(z)|)=1/c$ for all $z\in\C$. Hence, \eqref{eq. Qleqmv}  implies that $\|Q(z)\|<1$ for $z\in\rho(H_0)=\C\setminus\R$. This proves the limiting case $m=0$ in \eqref{eq. spectrum in two disks}.  
\end{proof}

\begin{remark}
The eigenvalue bound \eqref{eq. disk Schroedinger} of \cite{AAD01} for the Schr\"odinger operator with complex potential $V$ emerges from the corresponding bounds for the Dirac operator \eqref{eq. spectrum in two disks} in the nonrelativistic limit since
\[
\lim_{c\to\infty}(H(c)-mc^2-z)^{-1}=\begin{pmatrix}\left(-\frac{1}{2m}\,\Delta+V-z\right)^{-1}&0\\0&0\end{pmatrix},
\]
see e.g.\ \cite[Theorem 6.4]{Th}. Here, we have restored $c$ (the speed of light) by replacing $m$ by $mc^2$ and $c^{-1}\|V\|_1$ and $\|V\|_1$. It follows from Theorem \ref{thm. 1} that
\beq\label{eq. c-dependence}
\sigma_{\rm d}(H(c)-mc^2)\subset K_{mc^2r_0(c)}\big(mc^2(x_0(c)-1)\big)\,\dotcup \,K_{mc^2r_0(c)}\big(mc^2(x_0(c)+1)\big),
\eeq
where $x_0(c)$, $r_0(c)$ now depend on $c$ via $c^{-1}\|V\|_1$. An easy calculation shows that, in the limit $c\to\infty$, the right hand side of \eqref{eq. c-dependence} converges to the closed disk with radius $m/2\,\|V\|_1^2$ and centre at the origin, compare \eqref{eq. disk Schroedinger} (recall that $\hbar=1$ here). 
\end{remark}

\begin{remark}
The factorization of $V$ used above is optimal in the sense that for an arbitrary factorization $V=B'A'$, the last equality in \eqref{eq. Cauchy-Schwarz AB} generally turns into an inequality.
We also note that
\[
\|V\|_1\leq \int_{\R} \|V(x)\|_{\rm HS} \rd x =\int_{\R}\sqrt{\Tr V(x)^*V(x)}\rd x.
\]
\end{remark}

\begin{remark} 
For the massless Dirac operator ($m=0$) the absence of nonreal eigenvalues can also be proved by showing that the perturbed operator $H$ is similar to the selfadjoint operator $H_0$:

i) For potentials of the particular form 
\beq\label{eq. Syroid}
V=\begin{pmatrix}v_1&v_2\\v_2&-v_1\end{pmatrix}
\eeq
with $v_1,v_2\in L^1(\R)$, Syroid proved in \cite{Syroid83} by a method due to Kato \cite{Ka65} that, if $\|V\|_1<1$, then $H$ is similar to $H_0$,
\[
H=W_{\pm}H_0 W_{\pm}^{-1}.
\]
Here, $W_{\pm}$ are the Kato wave operators \cite{Ka65}, which admit the representation
\[
W_{\pm}=\slim_{t\to\pm\infty}\e^{\I t H}\e^{-\I t H_0},
\]
and $\e^{\I t H}$, $\e^{\I t H_0}$ are the strongly continuous groups with generators $H$, $H_0$, respectively. In particular, $H$ is a spectral operator in the sense of \cite{DS3} with absolutely continuous spectrum $\sigma(H)=\R$.
It is not difficult to check that the proof in \cite{Syroid83} also works without the assumption~\eqref{eq. Syroid}. 

ii) If $V$ is an electric potential of the form
\[
V=\begin{pmatrix}q&0\\0&q\end{pmatrix}
\]
with a complex-valued function $q\in L^1(\R)$, then the similarity of $H$ and $H_0$ (with $m=0$) holds without the assumption $\|V\|_1<1$. Indeed, if $U$ is the operator of multiplication with 
\[
U(x)=\exp\left(\I\sigma_1\int_{-\infty}^x q(y)\rd y\right),\quad x\in\R, 
\]
then $U$ is bounded and boundedly invertible in $\H$, and $U^{-1}H_0U=H$. Moreover, for $z\in\C$ with $\im\,z\neq 0$, the resolvent $R(z):=(H-z)^{-1}$ can be estimated by:
\beq\label{eq. resolvent norm scalar}
\|R(z)\|\leq \|U\|\,\|U^{-1}\|\,\|R_0(z)\|\leq |\im\, z|^{-1}\exp\left(2\int_{\R} |q(y)|\rd y\right).
\eeq
\end{remark}

In analogy to \eqref{eq. resolvent norm scalar}, the following proposition provides an estimate for the norm of the resolvent $R(z)$ of $H$ for general potentials $V$.

\begin{proposition}
Let $V=(V_{ij})_{i,j=1}^2$ with $V_{ij}\in L^1(\R)$ for $i,j=1,2$ be such that $\|V\|_1 <1$. Then, 
for $z\in\rho(H_0)=\C\setminus\big((-\infty,-m]\cup [m,\infty)\big)$ outside the union of the two disks $K_{mr_0}(mx_0)$ and $K_{mr_0}(-mx_0)$,
\begin{align}\label{eq. resolvent norm general}
\|R(z)\|\leq \frac{1}{\dist(z,\sigma(H_0))}+\frac{\eta(|\Phi(z)|)^2}{\im\,k(z)}\frac{\|V\|_1}{1-\eta(|\Phi(z)|)\|V\|_1}.
\end{align}
\end{proposition}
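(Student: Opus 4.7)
The plan is to combine the Birman--Schwinger factorisation $V=BA$ used in the proof of Theorem~\ref{thm. 1} with the Woodbury-type resolvent identity
\begin{equation*}
R(z)=R_0(z)-R_0(z)B(I+Q(z))^{-1}AR_0(z),
\end{equation*}
valid for $z\in\rho(H_0)$ whenever $I+Q(z)$ is boundedly invertible. For $z$ outside the two disks, Theorem~\ref{thm. 1} places $w=\Phi(z)$ in the annulus $B_{\rho^2,\rho^{-2}}$, which is equivalent to $\eta(|\Phi(z)|)\|V\|_1<1$; the Birman--Schwinger bound~\eqref{eq. Qleqmv} then yields invertibility of $I+Q(z)$ via Neumann series with
\begin{equation*}
\|(I+Q(z))^{-1}\|\le\frac{1}{1-\eta(|\Phi(z)|)\|V\|_1}.
\end{equation*}

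The first summand in~\eqref{eq. resolvent norm general} comes from $\|R_0(z)\|=\dist(z,\sigma(H_0))^{-1}$, which is immediate from the selfadjointness of $H_0$. For the second summand I would bound the flanking operators $\|AR_0(z)\|$ and $\|R_0(z)B\|$ separately. The key ingredient is that the pointwise estimate $\|R_0(x,y;z)\|\le \eta(|\Phi(z)|)\,\e^{-\im k(z)|x-y|}$ combined with Cauchy--Schwarz in $y$ gives the $L^2\!\to\! L^\infty$ bound
\begin{equation*}
\|(R_0(z)f)(x)\|_{\C^2}\le \eta(|\Phi(z)|)\biggl(\int_\R \e^{-2\im k(z)|x-y|}\,\rd y\biggr)^{1/2}\|f\|_\H=\frac{\eta(|\Phi(z)|)}{\sqrt{\im k(z)}}\,\|f\|_\H.
\end{equation*}
Since $\|A(x)\|^2=\|V(x)\|$, inserting this into the integral defining $\|AR_0(z)f\|_\H^2$ yields
\begin{equation*}
\|AR_0(z)f\|_\H^2\le \|R_0(z)f\|_\infty^2\,\|V\|_1\le \frac{\eta(|\Phi(z)|)^2\,\|V\|_1}{\im k(z)}\,\|f\|_\H^2,
\end{equation*}
and the same bound for $\|R_0(z)B\|=\|B^*R_0(\bar z)\|$ follows by taking adjoints, since $\|B^*(x)\|^2=\|V(x)\|$ and the quantities $\eta(|\Phi(\cdot)|)$ and $\im k(\cdot)$ are both invariant under $z\mapsto\bar z$. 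Assembling the three factors and applying the triangle inequality produces exactly~\eqref{eq. resolvent norm general}.

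The main subtlety is achieving the correct $\|V\|_1$-dependence in the second term. A naive Neumann expansion $\sum_n(R_0(z)V)^n$ would require $V$ bounded and produce an $\|V\|_\infty$-dependent estimate. The symmetric factorisation $V=BA$, combined with the smoothing $L^2\!\to\! L^\infty$ property of $R_0(z)$ whose sharp constant $\eta(|\Phi(z)|)/\sqrt{\im k(z)}$ rests on the exponential decay of the one-dimensional free Dirac resolvent kernel, is precisely what delivers a bound linear in $\|V\|_1$ with the stated constants.
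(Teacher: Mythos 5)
Your proof is correct and follows essentially the same route as the paper: the same resolvent formula \eqref{eq. resolvent formula bounded V}, the same Neumann-series inversion of $I+Q(z)$ driven by \eqref{eq. Qleqmv}, and the same triangle inequality splitting off $\|R_0(z)\|=\dist(z,\sigma(H_0))^{-1}$. The only point where you diverge is the bound on $\|AR_0(z)\|$ and $\|R_0(z)B\|$: the paper invokes the Hilbert--Schmidt estimate \eqref{eq. norm of AR0 and R0B}, whereas you derive an operator-norm bound by first proving an $L^2\to L^\infty$ smoothing estimate for $R_0(z)$ with constant $\eta(|\Phi(z)|)/\sqrt{\im k(z)}$ and then inserting $\|A(x)\|^2=\|V(x)\|$. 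Both routes give the product bound $\|AR_0(z)\|\,\|R_0(z)B\|\le \eta(|\Phi(z)|)^2\|V\|_1/\im k(z)$, and your computation reproduces exactly the constant in \eqref{eq. resolvent norm general}. (One small aside worth checking: a direct evaluation of the Hilbert--Schmidt norm yields $\|AR_0(z)\|_{\rm HS}\le \eta(|\Phi(z)|)\|V\|_1^{1/2}/\sqrt{\im k(z)}$, i.e.\ without the $\sqrt{2}$ appearing in the paper's \eqref{eq. norm of AR0 and R0B}; if that $\sqrt{2}$ were genuine it would produce a bound smaller by a factor $2$ than the statement of the Proposition. Your estimate is the one consistent with the stated conclusion.) The argument for $\|R_0(z)B\|$ by passing to $B^*R_0(\bar z)$ and using $\eta(|\Phi(\bar z)|)=\eta(|\Phi(z)|)$, $\im k(\bar z)=\im k(z)$ is also correct.
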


\begin{proof}
By iterating the second resolvent identity,
\[
R(z)=R_0(z)-R_0(z)VR(z),\quad 
\]
we infer that
\beq\label{eq. resolvent formula bounded V}
R(z)=R_0(z)-R_0(z)B(I_{\H}+Q(z))^{-1}AR_0(z).
\eeq
It is easy to see that
\begin{align}\label{eq. norm of AR0 and R0B}
\max\{\|AR_0(z)\|_{\rm HS},\|R_0(z)B\|_{\rm HS}\}\leq \frac{\eta(|\Phi(z)|)}{\sqrt{2\im\,k(z)}} \|V\|_1^{1/2}. 
\end{align}
From \eqref{eq. resolvent formula bounded V}, the selfadjointness of $H_0$ and the Neumann series, it follows that
\[
\|R(z)\|\leq\|R_0(z)\|+\|R(z)-R_0(z)\|\leq \frac{1}{\dist(z,\sigma(H_0))}+\frac{\|AR_0(z)\|\,\|R_0(z)B\|}{1-\|Q(z)\|}.
\]
If we combine this with \eqref{eq. norm of AR0 and R0B} and \eqref{eq. Qleqmv}, the claim is proved.
\end{proof}

\section{Sharpness of Theorem \rref{thm. 1} and purely imaginary potentials}\label{section sharpness}

In this section we provide an example which suggests that the eigenvalue enclosures of Theorem \rref{thm. 1} are sharp and that the assumption $\|V\|_1<1$ cannot be omitted. Moreover, we show how additional structure of the potential may be used to improve the bounds of Theorem \rref{thm. 1}.

\begin{example}\label{example}
We consider the family of delta-potentials
\beq\label{eq. delta-potentials}
V_{\tau}=\I\,\kappa\,\delta_0 \,W_{\tau},\quad W_{\tau}:=\begin{pmatrix}\e^{\I\,\tau}&0\\0&\e^{-\I\,\tau}\end{pmatrix},\quad \kappa >0,\quad -\pi\leq\tau<\pi,
\eeq
for which the operator $Q(z)$ reduces to the matrix
\beq\label{Q delta-potential}
Q(z)=-\frac{\kappa }{2}\begin{pmatrix}\e^{\I\,\tau}\,\zeta(z)&\e^{-\I\,\tau}\\ \e^{\I\,\tau} &\e^{-\I\,\tau}\,\zeta(z)^{-1}\end{pmatrix}
\eeq
in $\C^2$ if we define $\sgn(0)=1$. The perturbed operator $H_{\tau}$ may be rigorously defined as a rank two perturbation of $H_0$. Alternatively, it may be described in terms of boundary conditions, v.i.z.\

\begin{align*}
&\dom(H_{\tau})=\set{f\in L^2(\R,\C^2)\cap H^1(\R\setminus\{0\},\C^2)}{\sigma_1(f(0+)\!-\!f(0-))-\kappa \,W_{\tau} f(0+)=0},\\
&(H_{\tau}f)(x)=-\I  \,\frac{\rd}{\rd x}\,\sigma_1\,f(x)+m\,\sigma_3\,f(x),\quad x\in\R\setminus\{0\} ,\quad f\in\dom(H_{\tau}).
\end{align*}
It follows that
\[
\ker(H_{\tau}-z)\subset \left\{\begin{pmatrix}\zeta(z)\\ \sgn(\cdot) \end{pmatrix} \e^{\I\,k(z)\,|\cdot|},\begin{pmatrix}\sgn(\cdot)\\ \zeta(z)^{-1}\end{pmatrix} \e^{\I\,k(z)\,|\cdot|}\right\},
\]
and the boundary conditions imply that $\ker(H_{\tau}-z)$ is nontrivial if and only if 
\[
\det(I+Q(z))=\det \begin{pmatrix}1-\kappa /2\,\e^{\I\,\tau}\,\zeta(z)&-\kappa /2\,\e^{-\I\,\tau}\\ -\kappa /2\,\e^{\I\,\tau} &1-\kappa /2\, \e^{-\I\,\tau}\,\zeta(z)^{-1}\end{pmatrix}=0.
\]
Solving this equation for $\zeta(z)$, we find the solutions
\beq\label{eq. zeta delta potentials}
\zeta(z)=\zeta_{\pm}:=\e^{-\I\,\tau}\,\frac{1\pm \sqrt{1-\kappa ^2}}{\kappa}.
\eeq
Recalling \eqref{zeta, k}, \eqref{Phi, eta}, it is seen that we must have $\im\,\zeta(z)<0$ for $z$ to be an eigenvalue of $H_{\tau}$.

If $\kappa<1$, then $\im\,\zeta_{\pm}<0$ if and only if $0<\tau<\pi$; in this case, as $\tau$ varies from $0$ to $\pi$, the points $w_{\pm}:=\zeta_{\pm}^2$ trace out the boundary of the annulus $B_{\rho^2,\rho^{-2}}$ with 
\[
\rho:=\frac{1+\sqrt{1-\kappa ^2}}{\kappa},
\]
which is precisely $\rho$ in \eqref{eq. w and rho} with $\|V\|_1$ replaced by $\kappa$ $(<1)$. This implies that the two eigenvalues of $H_{\tau}$, $0<\tau<\pi$, lie on the boundaries of the disks $K_{mr_0}(\pm mx_0)$ of Theorem~\ref{thm. 1}. In the case $-\pi\leq\tau\leq 0$, there are no eigenvalues.


If $\kappa\geq 1$, then the square root in \eqref{eq. zeta delta potentials} becomes imaginary, and it is easily verified that $\zeta_{\pm}$ lie on the unit circle, with 
\[
\im\,\zeta_{\pm}=\frac{1}{\kappa}\left(-\sin(\tau)\pm\cos(\tau)\sqrt{\kappa^2-1}\right). 
\]
Hence, for $m\neq 0$, there are either zero, one, or two eigenvalues; as theta varies, they cover the imaginary axis.

%
%
%
%
%
%
A straightforward calculations shows that
\begin{align*}
\zeta_+=1 & \Longleftrightarrow   \tau=\arccos(1/\kappa),\\
\zeta_-=-1  & \Longleftrightarrow   \tau=\pi-\arccos(1/\kappa).
\end{align*}
Hence, for $m=0$, 
\[
\sigma(H_{\tau})\cap(\C\setminus\R)=\begin{cases}
                              \set{z\in\C}{\im\,z >0},\quad &\tau=\arccos(1/\kappa),\\
                              \set{z\in\C}{\im\,z <0},\quad &\tau=\pi-\arccos(1/\kappa),\\
                              \qquad\quad\emptyset,\quad &\mbox{otherwise}.
                              \end{cases}
\]
 Hence, for $\kappa\geq 1$, the eigenvalues of $H_{\tau}$ need not lie in a bounded set, and hence an enclosure as in Theorem \ref{thm. 1} cannot hold. 
%
\end{example}

Incidentally, this example (with $m=0$) illustrates two typical non-selfadjoint phenomena: First, since $H_{\tau}$ is a rank two resolvent perturbation of $H_0$, the essential spectra are clearly the same, $\sigma_{\rm e}(H_{\tau})=\sigma_{\rm e}(H_0)=\R$. However, for $\tau=
\arccos(1/\kappa)$ and $\tau=\pi-\arccos(1/\kappa)$, the spectrum in $\C\setminus\R$ is not discrete, but consists of dense point spectrum in the upper or lower half-plane; this is not a contradiction to~\cite[Theorem~3.1]{GGK1} since $\C\setminus\R$ is not connected.
Secondly, although it can be shown that the mapping $\tau\mapsto H_{\tau}$ is continuous in the norm resolvent topology, for $m=0$
the spectrum $\sigma(H_{\tau})$ is lower-semidiscontinuous as a function of $\tau$ at the points $\tau=
\arccos(1/\kappa)$ and $\tau=\pi-\arccos(1/\kappa)$, compare e.g.\ \cite[IV.3.2]{Ka}.

\medskip
If the potential has additional structure, the assumption $\|V\|_1<1$ may be weakened in some cases. As an example, we consider perturbations by purely imaginary potentials $V=\I\, \widetilde{V}$ with $\widetilde{V}\geq 0$. Such potentials have been studied in~\cite{LaSa09} in the framework of Schr\"odinger operators.

\begin{theorem}\label{thm. imaginary potential}
Let $V=\I\, \widetilde{V}$, with $\widetilde{V}=(\widetilde{V}_{ij})_{i,j=1}^2$ such that $\widetilde{V}\geq 0$ and $\widetilde{V}_{ij}\in L^1(\R)$ for $i,j=1,2$. Then $\sigma_{\rm d}(H)$ lies in the open upper half plane; if $z\in\rho(H_0)=\C\setminus\big((-\infty,-m]\cup [m,\infty)\big)$ and
\beq\label{eq. imaginary potentials}
\left(\re\,\frac{z+m}{\sqrt{z^2-m^2}}\right)\|\widetilde{V}_{11}\|_1+\biggl(\re\,\frac{\sqrt{z^2-m^2}}{z+m}\biggr)\|\widetilde{V}_{22}\|_1< 2,
\eeq
%
then $z\notin \sigma(H)$.
In particular, if $m=0$ and 
\beq\label{eq. sum less than 2}
\|\widetilde{V}_{11}\|_1+\|\widetilde{V}_{22}\|_1< 2,
\eeq
then the spectrum of $H$ is $\R$.
\end{theorem}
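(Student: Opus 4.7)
The plan is to apply the Birman--Schwinger principle in the form already set up in Section~\ref{section Integrable potentials}. I would use the factorization $V = BA$ with $A = \widetilde V^{1/2}$ and $B = \I\,\widetilde V^{1/2}$, so that $Q(z) = AR_0(z)B = \I\,\widetilde V^{1/2}R_0(z)\widetilde V^{1/2}$. As in the proof of Theorem~\ref{thm. 1}, the statement $z \in \sigma_{\rm p}(H)$ becomes the existence of $\phi \neq 0$ with $(I + Q(z))\phi = 0$; taking the inner product with $\phi$ and setting $\xi := \widetilde V^{1/2}\phi$, this is equivalent to
\[
(R_0(z)\xi,\xi) = \I\,\|\phi\|^2.
\]

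For the first assertion (eigenvalues lie in the open upper half plane), I would take imaginary parts and invoke the standard identity $\im(R_0(z)\xi,\xi) = \im(z)\,\|R_0(\overline z)\xi\|^2$, which yields $\|\phi\|^2 = \im(z)\,\|R_0(\overline z)\xi\|^2$ and hence $\im(z)>0$. The boundary case $\im(z)=0$ with $z\in\rho(H_0)$ is ruled out since the identity then forces $\xi = 0$, so $V\phi = 0$ and $(H_0-z)\phi = 0$, contradicting $z\in\rho(H_0)$.

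For the criterion \eqref{eq. imaginary potentials}, I would expand $(R_0(z)\xi,\xi) = \I\|\phi\|^2$ using the explicit kernel from the proof of Theorem~\ref{thm. 1} to obtain
\[
\zeta\,\t J_1 \,+\, \zeta^{-1}\,\t J_2 \,+\, \t J_3 \,=\, 2\|\phi\|^2,
\]
where $\t J_j = \int\!\!\int e^{\I k|x-y|}\xi_j(y)\overline{\xi_j(x)}\,\rd x\,\rd y$ for $j=1,2$ and $\t J_3$ collects the off-diagonal $\sgn$-kernel terms. Taking real parts and using Plancherel together with the Fourier representation $\t J_1 = -\frac{\I k}{\pi}\!\int \!|\hat\xi_1(p)|^2/(p^2-k^2)\,\rd p$, I would prove the key identity
\[
\re(\zeta\,\t J_1)
 \,=\, \frac{\im z}{\pi}\int_\R\frac{p^2+|z+m|^2}{|p^2-k(z)^2|^2}|\hat\xi_1(p)|^2\,\rd p,
\]
together with the companion integral identity
\[
\frac{\im z}{\pi}\int_\R \frac{p^2+|z+m|^2}{|p^2-k(z)^2|^2}\,\rd p \,=\, \re\zeta(z),
\]
which is evaluated by residues at $p=k$ and $p=-\overline k$ in the upper half-plane; combined with $|\hat\xi_1(p)|\leq \|\xi_1\|_1$ this gives the sharp bound $\re(\zeta\,\t J_1)\leq \re(\zeta)\,\|\xi_1\|_1^2$, and analogously for $\t J_2$. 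The pointwise inequality $|\xi_j(x)|\leq \sqrt{\widetilde V_{jj}(x)}\,\|\phi(x)\|$ (valid for any PSD matrix-valued $\widetilde V$) followed by Cauchy--Schwarz then yields $\|\xi_j\|_1^2\leq \|\widetilde V_{jj}\|_1\,\|\phi\|^2$, and substituting gives a contradiction with \eqref{eq. imaginary potentials}. Finally, for $m=0$ the branch convention gives $\zeta(z)=1$ throughout the upper half plane, so the criterion collapses to \eqref{eq. sum less than 2}; combined with the first assertion and $\sigma_{\rm e}(H)=\R$, we obtain $\sigma(H)=\R$.

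The main obstacle will be handling the cross term $\re(\t J_3)$: unlike the diagonal contributions, it has no definite sign, and the naive Cauchy--Schwarz bound $|\t J_3|\leq 2\|\xi_1\|_1\|\xi_2\|_1$ loses a factor that would weaken \eqref{eq. imaginary potentials} from ``$<2$'' to ``$<1$''. The fix will be to exploit the antisymmetry of the $\sgn$-kernel together with the positivity constraint $|\widetilde V_{12}|^2\leq \widetilde V_{11}\widetilde V_{22}$ to absorb $\re(\t J_3)$ into the diagonal estimates with the sharp constant, e.g.\ via the Fourier representation $\re(\t J_3) = \frac{4E\,\im z}{\pi}\!\int\! p\re(\overline{\hat\xi_1}\hat\xi_2)/|p^2-k^2|^2\,\rd p$ and a weighted Cauchy--Schwarz tied to the companion identity for $\re(\zeta^{-1})$.
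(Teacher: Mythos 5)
Your handling of the first assertion (eigenvalues lie in the open upper half plane) is fine and essentially equivalent to the paper's: the paper phrases it via the numerical range of $I+Q(z)$ using $\re Q(z)=-(\im z)(R_0(z)\widetilde V^{1/2})^*(R_0(z)\widetilde V^{1/2})$, while you phrase the same computation pointwise on a Birman--Schwinger eigenvector; both reduce to the identity $\im R_0(z)=(\im z)\,R_0(\bar z)^*R_0(\bar z)$. Your Fourier/residue identities for $\re(\zeta J_1)$ and the companion normalization integral are correct (I checked them), as is the pointwise bound $|\xi_j(x)|\le\sqrt{\widetilde V_{jj}(x)}\,\|\phi(x)\|$.

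The gap is exactly where you flag it, and it is not a minor detail to be filled in later: the cross term $\re J_3$ is genuinely not controlled by your argument, and there is a structural reason. You are working with a single Birman--Schwinger eigenvector $\xi$ and trying to estimate the quadratic form $(R_0(z)\xi,\xi)$, which unavoidably contains the off-diagonal $\sgn$-kernel contribution. The naive Cauchy--Schwarz bound costs a factor, as you note, and the "weighted" fix you gesture at has to somehow show that the term $-\tfrac{4\,\re z\,\im z}{\pi}\int p\,\re(\hat\xi_1\overline{\hat\xi_2})/|p^2-k^2|^2\,\rd p$ is absorbed into the nonnegative slacks $\re\zeta\|\xi_1\|_1^2-\re(\zeta J_1)$ and $\re\zeta^{-1}\|\xi_2\|_1^2-\re(\zeta^{-1}J_2)$; those slacks can be arbitrarily small near the $p$ where $|\hat\xi_j(p)|$ attains $\|\xi_j\|_1$, while the cross term has no matching smallness, so this does not close pointwise in $p$, and I do not see how the constraint $|\widetilde V_{12}|^2\le\widetilde V_{11}\widetilde V_{22}$ rescues it (that constraint relates the potential entries, not $\hat\xi_1$ and $\hat\xi_2$). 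The paper's argument avoids the problem entirely by working with the \emph{trace} of $\re Q(z)$ rather than a single quadratic form: since $-1$ is an eigenvalue of $Q(z)$ and $\re Q(z)\le 0$ for $\im z>0$, all eigenvalues of $Q(z)$ have $\re\lambda_j\le 0$, so $\sum_j\re\lambda_j(Q(z))\le -1$; a trace-type inequality for Hilbert--Schmidt operators with nonpositive real part (quoted from \cite{LaSa09,BO08,FrLaLiSe06}) then gives $\tr\re Q(z)\le -1$, and on the diagonal $x=y$ the off-diagonal $\sgn$ entries of the resolvent kernel vanish, so only $\re\zeta$ and $\re\zeta^{-1}$ times the diagonal entries $\widetilde V_{jj}$ survive. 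The sum over all eigenvalues and the cyclicity of the matrix trace are what kill the cross term; neither mechanism is available to a one-eigenvector argument. To salvage your route you would need a genuinely new inequality for $\re J_3$; as written, the proof is incomplete precisely at the point you identify.
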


\begin{remark}
The set of points satisfying \eqref{eq. imaginary potentials} does not have such a simple form as the disks in Theorem \ref{thm. 1}. However, \eqref{eq. imaginary potentials} implies e.g.\ that for $m>0$
\[
\sigma(H)\cap\I\,\R\subset\SSet{\I\,\mu}{\mu>0,\,\frac{\sqrt{\mu^2+m^2}}{\mu}\geq \frac{\|\widetilde{V}_{11}\|_1+\|\widetilde{V}_{22}\|_1}{2}}.
\]
\end{remark}

\begin{proof}
We follow the lines of the proof of \cite[Theorem 9]{LaSa09}.
Like in the proof of Theorem \ref{thm. 1} we assume that $V$ is bounded; for the proof of the general case, see Section~\ref{section Construction of the perturbed operator}.

Let $z\in\rho(H_0)$ and $Q(z)$ be given by \eqref{eq. def. Q}, i.e.\
\[
Q(z)=\I\,\widetilde{V}^{1/2}R_0(z)\widetilde{V}^{1/2}.
\]
Using the first resolvent identity, we find
\beq\label{eq. ReQ}
\re\,Q(z)=-(\im\,z) (R_0(z)\widetilde{V}^{1/2})^*(R_0(z)\widetilde{V}^{1/2}).
\eeq
If $\im\, z\leq 0$, this implies that $\re\,Q(z)\geq 0$. Hence the numerical range 
\[
W(I+Q(z)):=\set{((I+Q(z))f,f)}{f\in\H,\,\|f\|=1},
\]
satisfies $$W(I+Q(z))\subset\set{\lm\in\C}{\re\,\lm\geq 1}.$$ Since the spectrum of a bounded operator is contained in the closure of its numerical range, see \cite[Corollary V.3.3]{Ka}, it follows that $0\in\rho(I+Q(z))$, i.e.\ $z\in\rho(H)$ for $\im\, z\leq 0$. 

To prove the second claim, assume to the contrary that $z\in\rho(H_0)$ with $\im\, z> 0$ satisfies condition \eqref{eq. imaginary potentials}, and $z\in\sigma(H)$. Then \eqref{eq. ReQ} implies that $\re\,Q(z)\leq 0$, i.e.\ the spectrum of $Q(z)$ lies in the left half plane, and $-1$ is an eigenvalue of $Q(z)$. Hence the eigenvalues $\lm_j(Q(z))$ of $Q(z)$ satisfy
\beqnt 
\sum_{j=1}^{\infty} \re\,\lm_j(Q(z))\leq -1.
\eeqnt
%
%
%
%
It follows that
\beq\label{eq. imaginary contradiction}
1\leq-\sum_{j=1}^{\infty} \re\,\lm_j(Q(z))\leq -\tr(\re\,Q(z))=-\int_{\R}\Tr(\re\,Q)(x,x;z)\,\rd x,
\eeq
where $(\re\,Q)(\cdot,\cdot;z)$ is the kernel of the operator $\re\,Q(z)$; for the proof of the second inequality we refer to \cite[Corollary 1]{LaSa09} or \cite[Theorem 1]{BO08}, see also \cite[Lemma 1]{FrLaLiSe06} for a different idea of the proof. 
Since 
\[
\re\,Q(z)=-\widetilde{V}^{1/2}\im\, R_0(z)\widetilde{V}^{1/2},
\]
we have
\[
(\re\,Q)(x,x;z)=-\frac{1}{2}\widetilde{V}(x)^{1/2}\begin{pmatrix}\re\,\zeta(z)&0\\0&\re\,\zeta(z)^{-1}\end{pmatrix}\widetilde{V}(x)^{1/2}.
\]
Together with assumption \eqref{eq. imaginary potentials}, this implies
\[
-\tr(\re\,Q(z))=\frac{1}{2}\left(\re\,\zeta(z)\int_{\R}\widetilde{V}_{11}(x)\,\rd x+\re\,\zeta(z)^{-1}\int_{\R}\widetilde{V}_{22}(x)\,\rd x\right)<1,
\]
a contradiction to \eqref{eq. imaginary contradiction}. The last claim is immediate since \eqref{eq. imaginary potentials} reduces to \eqref{eq. sum less than 2} in the case $m=0$. 
\end{proof}








\section{Slowly decaying potentials}\label{section Slowly decaying potentials}

In this section we consider potentials decaying more slowly at infinity than just $V_{ij}\in L^1(\R)$ as in Theorem \ref{thm. 1}. We assume that $V_{ij}\in L^1(\R)+L^{\infty}_0(\R)$, i.e.\ there exists a decomposition $V=W+X$ such that $W_{ij}\in L^1(\R)$and $X_{ij}\in L^{\infty}_0(\R)$; here, $L^{\infty}_0(\R)$ is the space of bounded functions that vanish at infinity. Schr\"odinger operators with this type of potentials have been studied in \cite{DaNa02}.

It is well known, and easy to see, that if $V_{ij}\in L^1(\R)+L^{\infty}_0(\R)$ and $\eps>0$, then there exists a (generally non-unique) decomposition $V=W+X$ with $W_{ij}\in L^1(\R)$ and $\|X\|\leq \eps$, see \cite{DaNa02}. We set
\beq\label{Ceps}
C_{\eps}:=\inf\SSet{\int_{\R} \|W(x)\|\rd x}{V=W+X,\,W_{ij}\in L^1(\R),\, \|X\|\leq \eps}\in [0,\infty).
\eeq
%


\begin{theorem}\label{thm. slowly decaying 1}
Let $V=(V_{ij})_{i,j=1}^2$ with $V_{ij}\in L^1(\R)+L^{\infty}_0(\R)$ for $i,j=1,2$.
Let $z\in\rho(H_0)=\C\setminus\big((-\infty,-m]\cup [m,\infty)\big)$ and  let $\eta$, $\Phi$ be defined as in \eqref{Phi, eta}, i.e.\
\beq\label{eq. etaofphi(z)}
\eta(|\Phi(z)|)=\frac{1}{\sqrt{2}}\,\sqrt{1+\frac{|z|^2+m^2}{|z^2-m^2|}},
\eeq
and $C_{\eps}$ as in \eqref{Ceps}. If for some $\eps>0$ 
\beq\label{eq. complicated spectral inclusion I}
C_{\eps}<\eta(|\Phi(z)|)^{-1}
\eeq
and 
\beq\label{eq. complicated spectral inclusion II}
\frac{1}{\dist(z,\sigma(H_0))}+\frac{\eta(|\Phi(z)|)^2}{\im\, \sqrt{z^2-m^2}} \, \frac{C_{\eps}}{1-\eta(|\Phi(z)|)\,C_{\eps}}<\frac{1}{\eps}, 
\eeq
 then $z\notin\sigma(H)$.
\end{theorem}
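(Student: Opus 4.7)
The plan is to reduce the slowly decaying case to the integrable case already handled in Theorem~\ref{thm. 1} and the subsequent resolvent bound. Concretely, I would split the potential as $V=W+X$ with $W_{ij}\in L^1(\R)$ and $\|X\|\le\eps$, regard $\widetilde H:=H_0+W$ as an auxiliary ``base'' operator to which Section~\ref{section Integrable potentials} applies, and then treat $X$ as a small bounded perturbation of $\widetilde H$ via a Neumann series. This explains the two hypotheses: \eqref{eq. complicated spectral inclusion I} ensures that the $L^1$-part can be absorbed, and \eqref{eq. complicated spectral inclusion II} ensures that the resolvent of $\widetilde H$ is small enough compared to~$1/\eps$ to close the Neumann series.

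In more detail, given $\delta>0$, choose (by definition of the infimum $C_\eps$) a decomposition $V=W+X$ with $W_{ij}\in L^1(\R)$, $\|X\|\le\eps$, and
\[
\|W\|_1=\int_{\R}\|W(x)\|\,\rd x\le C_\eps+\delta.
\]
Set $\widetilde H:=H_0+W$. By \eqref{eq. complicated spectral inclusion I} we may pick $\delta$ so small that $\|W\|_1<\eta(|\Phi(z)|)^{-1}$, i.e.\ that $\eta(|\Phi(z)|)\|W\|_1<1$. Then the estimate \eqref{eq. Qleqmv} (applied to $\widetilde H$ instead of~$H$) yields $\|A R_0(z)B\|<1$, so that $z\in\rho(\widetilde H)$ and the resolvent formula \eqref{eq. resolvent formula bounded V} produces, exactly as in the proof of the Proposition,
\[
\|R_{\widetilde H}(z)\|\le \frac{1}{\dist(z,\sigma(H_0))}+\frac{\eta(|\Phi(z)|)^2}{\im\,\sqrt{z^2-m^2}}\,\frac{\|W\|_1}{1-\eta(|\Phi(z)|)\,\|W\|_1}.
\]
Since the right-hand side is a continuous, increasing function of $\|W\|_1$ on the relevant interval, condition \eqref{eq. complicated spectral inclusion II} lets us shrink $\delta$ further so that the bound above remains strictly less than $1/\eps$.

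Finally, writing $H=\widetilde H+X$ and using the factorization
\[
H-z=(\widetilde H-z)\bigl(I_\H+R_{\widetilde H}(z)X\bigr),
\]
the inequality $\|R_{\widetilde H}(z)X\|\le\|R_{\widetilde H}(z)\|\,\|X\|<(1/\eps)\eps=1$ makes $I_\H+R_{\widetilde H}(z)X$ boundedly invertible by a Neumann series; hence $z\in\rho(H)$. The only substantive issue is verifying that the two quantitative bounds can be achieved simultaneously for one and the same decomposition; this is where the continuity argument in~$\delta$ enters and is the step that merits care, but it is not genuinely difficult given the monotone dependence of both sides on $\|W\|_1$. Everything else is a mechanical application of Theorem~\ref{thm. 1} and the preceding Proposition to the auxiliary operator~$\widetilde H$.
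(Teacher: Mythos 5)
Your proof is correct and reaches exactly the paper's sufficient conditions, but it organizes the argument differently. The paper stays inside the Birman--Schwinger framework: it forms the block factorization $V=BA$ with $A=\begin{pmatrix}A_W\\A_X\end{pmatrix}$, $B=\begin{pmatrix}B_W & B_X\end{pmatrix}$ acting between $\H$ and $\K=\H\oplus\H$, builds the $2\times 2$ operator matrix $Q(z)=AR_0(z)B$, and then applies a Schur--Frobenius factorization: it first inverts $I_{\H}+A_XR_0(z)B_X$ (using $\|X\|\le\eps<\dist(z,\sigma(H_0))$), and then checks that the Schur complement $S(z)$ is invertible via a Neumann-series bound $\omega(z)<1$, which unravels to \eqref{eq. complicated spectral inclusion II}. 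You instead do a genuinely two-stage perturbation at the level of the operators themselves: first set $\widetilde H=H_0+W$, use Theorem~\ref{thm. 1} and the resolvent bound \eqref{eq. resolvent norm general} (applied with $W$ in place of $V$, which is legitimate since $\eta(|\Phi(z)|)\ge 1$ forces $C_\eps<1$ and hence $\|W\|_1<1$ for $\delta$ small, and the condition $\eta(|\Phi(z)|)\|W\|_1<1$ is precisely what places $z$ outside the two disks) to get $z\in\rho(\widetilde H)$ together with a quantitative bound on $\|R_{\widetilde H}(z)\|$, and then you perturb by the bounded piece $X$ through $H-z=(\widetilde H-z)(I_\H+R_{\widetilde H}(z)X)$ and a Neumann series. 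The $\delta$-continuity argument you give is what the paper circumvents by quoting that the infimum in \eqref{Ceps} is attained, but your version is equally valid. The trade-off: your route is more elementary and avoids introducing the doubled space $\K$ and the Schur complement, while the paper's route keeps everything phrased as invertibility of $I_{\K}+Q(z)$ with a single Birman--Schwinger operator, which is the uniform language used throughout the paper and dovetails more directly with the unbounded-potential construction of Section~\ref{section Construction of the perturbed operator} (Definition~\ref{def. compatible} and Proposition~\ref{proposition construction for slowly decaying} are built around exactly the factorization \eqref{eq. factorization 2}).
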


\begin{remark}
If $V_{ij}\in L^1(\R)$, then, in the limit $\eps\to 0$, the condition \eqref{eq. complicated spectral inclusion I} becomes \eqref{eq. v less than c} since $\lim_{\eps\to 0}C_{\eps}=\|V\|_1$ (compare \eqref{eq. Qleqmv}), and \eqref{eq. complicated spectral inclusion II} is automatically satisfied.
Hence, Theorem \ref{thm. 1} is a special case of Theorem~\ref{thm. slowly decaying 1}.
\end{remark}

\begin{proof}
Again, in order to avoid technical complications we shall assume that $V$ is bounded. This restriction does not play a role for the eigenvalue bounds and may be omitted if the construction of Section \ref{section Construction of the perturbed operator} is used.

It can be shown that the infimum in \eqref{Ceps} is in fact a minimum, see \cite{DaNa02}. Let $W$ be the corresponding minimizing element, and set $X:=V-W$. Let
\begin{alignat*}{3}
A_{W}&:=|W|^{1/2},\quad& B_{W}&:=U_{W}|W|^{1/2},\\
A_{X}&:=|X|^{1/2},\quad& B_{X}&:=U_{X}|X|^{1/2},
\end{alignat*}
where $U_W$ and $U_X$ are the partial isometries in the polar decompositions of $W$ and $X$, respectively. Set $\K:=\H\oplus\H$ and
define the operators
\beq\label{eq. factorization 2}
A:=\begin{pmatrix}A_W\\A_X\end{pmatrix}:\H\to \K,\quad B:=\begin{pmatrix}B_W \!\!&\!\! B_X\end{pmatrix}:\K\to \H.
\eeq
Then $V=BA$ and $z\in\rho(H_0)$ is an eigenvalue of $H$ if and only if $-1$ is an eigenvalue of $Q(z)$,
\[
Q(z):=AR_0(z)B=\begin{pmatrix}A_W R_0(z) B_W & A_W R_0(z) B_X\\ A_X R_0(z) B_W & A_X R_0(z) B_X\end{pmatrix},\quad z\in\rho(H_0).
\]
Since $\|R_0(z)\|=1/\dist (z,\sigma(H_0))$ and $\|A_X\|=\|B_X\|=\eps^{1/2}<\dist(z,\sigma(H_0))^{1/2}$ by \eqref{eq. complicated spectral inclusion II}, it follows that $I_{\H}+A_X R_0(z) B_X$ has a bounded inverse. 
By the well-known Schur-Frobenius factorization (see e.g.\ \cite[Proposition 1.6.2]{CT}), $I_{\K}+Q(z)$ has a bounded inverse if and only if so does its Schur complement $S(z)$,
\[
S(z):=I_{\H}+A_W R_0(z) B_W-A_W R_0(z) B_X\, (I_{\H}+A_X R_0(z) B_X)^{-1}\, A_X R_0(z) B_W.
\]
By a Neumann series argument, the latter holds whenever 
\begin{align}\label{eq. omega(z)}
\omega  (z):=\frac{\|A_W R_0(z) B_X\|\, \|A_XR_0(z) B_W\|}{(1-\|A_W R_0(z) B_W\|)(1- \|A_X R_0(z) B_X\|)}<1,
\end{align}
provided that $I_{\H}+A_W R_0(z) B_W$ has a bounded inverse as well.
By the estimates used in the proof of Theorem \ref{thm. 1}, we have
\[
\|A_W R_0(z) B_W\|\leq \eta(|\Phi(z)|)\,C_{\eps}<1
\]
by \eqref{eq. complicated spectral inclusion I}. Together with \eqref{eq. norm of AR0 and R0B} this yields
\[
\omega(z)\leq \frac{\eps\,C_{\eps}\eta(|\Phi(z)|)^2}{(\im\, \sqrt{z^2-m^2})(1-\eta(|\Phi(z)|)\,C_{\eps})\,(1-\eps/\dist(z,\sigma(H_0)))}.
\]
It is not difficult to check that the right hand hand side above is $<1$ if (and only if) \eqref{eq. complicated spectral inclusion II} holds.
\end{proof}

Theorem \ref{thm. slowly decaying 1} is the analogue of \cite[Theorem~1.5]{DaNa02} for Dirac operators. 
The next theorem is the counterpart to \cite[Theorem 2.9]{DaNa02}. Keeping the same notation as in~\cite{DaNa02}, we define the positive, decreasing convex function
\[
F_V(s):=\sup_{y\in\R}\int_{\R}\|V(x)\|\,\e^{-s|x-y|}\rd x,\quad s>0.
\]

\begin{theorem}\label{thm. slowly decaying 2}
Let $V=(V_{ij})_{i,j=1}^2$ with $V_{ij}\in L^1(\R)+L^{\infty}_0(\R)$ for $i,j=1,2$.
Let $z\in\rho(H_0)=\C\setminus\big((-\infty,-m]\cup [m,\infty)\big)$ and  let $\eta$, $\Phi$ be defined as in \eqref{eq. etaofphi(z)}. 
If 
\beq\label{eq. condition on F}
\eta(|\Phi(z)|)\, F_V\left(\,\im \, \sqrt{z^2-m^2}\right)< 1,
\eeq 
then $z\notin\sigma(H)$.
If the equation $F_V(\mu)=\mu/m$ has a solution $\mu_0\in (-m,m)$, it is unique and
\beqnt
\sigma(H)\cap \left(-\sqrt{m^2-\mu_0^2},\sqrt{m^2-\mu_0^2}\right)=\emptyset.
\eeqnt
\end{theorem}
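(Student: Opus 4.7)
The plan is to mirror the proof of Theorem~\ref{thm. 1}, using the Birman--Schwinger principle with the standard factorization $V=BA$, $A=|V|^{1/2}$, $B=U|V|^{1/2}$, but to estimate the norm of $Q(z)=AR_0(z)B$ with a weighted Schur test rather than the Cauchy--Schwarz bound \eqref{eq. Cauchy-Schwarz AB}. The point is that $F_V$ is tailored to exploit the exponential decay $\e^{-\im k(z)|x-y|}$ of the free resolvent kernel, a feature that was discarded when deriving \eqref{eq. Qleqmv}.

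For the first assertion, let $z\in\rho(H_0)$ and put $s:=\im\sqrt{z^2-m^2}>0$. The pointwise kernel bound obtained in the proof of Theorem~\ref{thm. 1} reads
\[
\|Q(x,y;z)\|\leq \eta(|\Phi(z)|)\,\|V(x)\|^{1/2}\,\|V(y)\|^{1/2}\,\e^{-s|x-y|}.
\]
I would then apply the Schur test with the weight $w(x):=\|V(x)\|^{1/2}$: the relevant integral collapses to
\[
\int_{\R}\|V(x)\|^{1/2}\|V(y)\|^{1/2}\e^{-s|x-y|}w(y)\rd y=\|V(x)\|^{1/2}\int_{\R}\|V(y)\|\e^{-s|x-y|}\rd y\leq F_V(s)\,w(x),
\]
and symmetrically in $x$. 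Hence $\|Q(z)\|\leq \eta(|\Phi(z)|)\,F_V(s)$, and \eqref{eq. condition on F} gives $\|Q(z)\|<1$, so $z\notin\sigma(H)$ by the Birman--Schwinger argument used in Theorem~\ref{thm. 1}.

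For the second assertion, I restrict to real $z$ in the spectral gap $(-m,m)$, where $k(z)=\I\sqrt{m^2-z^2}$, so $\im k(z)=\sqrt{m^2-z^2}$. A short computation from \eqref{Phi, eta} yields $|\Phi(z)|=(m+z)/(m-z)$ and
\[
\eta(|\Phi(z)|)=\frac{m}{\sqrt{m^2-z^2}}.
\]
Setting $\mu:=\sqrt{m^2-z^2}\in(0,m)$, the condition \eqref{eq. condition on F} becomes $m F_V(\mu)<\mu$. Since $F_V$ is non-increasing on $(0,\infty)$ while $\mu/m$ is strictly increasing, the equation $F_V(\mu)=\mu/m$ has at most one solution, and if such a $\mu_0$ exists, the strict inequality $F_V(\mu)<\mu/m$ holds precisely on $(\mu_0,m)$, which translates into $|z|<\sqrt{m^2-\mu_0^2}$. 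Combined with the first assertion, this yields the asserted eigenvalue-free interval.

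The main non-routine step is the choice of weight $w=\|V\|^{1/2}$ in the Schur test: it is what absorbs the half-power of $V$ coming from the factorization and converts the remaining integral into $F_V(s)$. Everything else -- the explicit form of $\eta$ in the gap, the uniqueness of $\mu_0$, and the size of the resulting interval -- is then just monotonicity of $F_V$ against the linear function $\mu/m$.
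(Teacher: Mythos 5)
Your proof follows the paper's argument essentially verbatim: same Birman--Schwinger reduction, same standard factorization $V=BA$ with $A=|V|^{1/2}$, $B=U|V|^{1/2}$, and the same weighted Schur test applied to the kernel bound $\|Q(x,y;z)\|\leq \eta(|\Phi(z)|)\|V(x)\|^{1/2}\|V(y)\|^{1/2}\e^{-s|x-y|}$; the paper writes the weight as the two-variable ratio $\rho(x,y)=\|V(x)\|^{1/2}\|V(y)\|^{-1/2}$ while you use the equivalent one-variable form $w(x)=\|V(x)\|^{1/2}$, and the two yield the identical estimate $\|Q(z)\|\leq\eta(|\Phi(z)|)F_V(s)$. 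The treatment of the second assertion, restricting to $z\in(-m,m)$, computing $\eta(|\Phi(z)|)=m/\sqrt{m^2-z^2}$, and using the monotonicity of $F_V$ against $\mu/m$, is likewise the paper's own argument.
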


\begin{remark}
If $V_{ij}\in L^1(\R)$, then by \cite[Lemma 2.1]{DaNa02}
\[
F_V(s)\leq F_V \|V(x)\|_1,\quad s>0.
\]
Hence, Theorem \ref{thm. 1} is a special case of Theorem \ref{thm. slowly decaying 2}.
\end{remark}

\begin{proof}
As in the proof of Theorem \ref{thm. 1}, we assume that $V$ is bounded and use the factorization $V=BA$ with $A=|V|^{1/2}$, $B=U|V|^{1/2}$ (see \eqref{eq. standard factorization}). As before, we set $Q(z)=AR_0(z)B$ (see \eqref{eq. def. Q}).

Using a straightforward generalization of the Schur inequality to matrix-valued kernels, we obtain
\begin{align*}
\|Q(z)\|\leq \left(\sup_{x\in\R}\int_{\R} \|Q(x,y;z)\|\, \frac{\rd y}{\rho(x,y)}\right)^{1/2}\left(\sup_{y\in\R}\int_{\R} \|Q(x,y;z)\|\,\rho(x,y)\rd x\right)^{1/2},
\end{align*}
where $Q(x,y;z)$ is the kernel of $Q(z)$ and $\rho(x,y)$ is a positive weight. Choosing $\rho(x,y):=\|V(x)\|^{1/2}\|V(y)\|^{-1/2}$ and using that $|R_0(x,y;z)|\leq \eta(|\Phi(z)|)$, we arrive at
\[
\|Q(z)\|\leq \eta(|\Phi(z)|)\,  F_V(\,\im \, \sqrt{z^2-m^2}).
\]
This proves the first part of the theorem. 

Let $z\in(-m,m)$. Observing that, by \eqref{eq. etaofphi(z)}, 
\[
\eta(|\Phi(z)|)=\frac{1}{\sqrt{2}}\,\sqrt{1+\frac{m^2+z^2}{m^2-z^2}}=\frac{m}{\sqrt{m^2-z^2}},
\]
we infer that
\[
\eta(|\Phi(z)|)\,  F_V\left(\,\im \, \sqrt{z^2-m^2}\right)=1\quad \Longleftrightarrow \quad F_V\left(\sqrt{m^2-z^2}\right)=\frac{\sqrt{m^2-z^2}}{m}. 
\]
Since the function $\mu\mapsto F_V(\mu)$ is decreasing \cite[Lemma 2.1]{DaNa02} and $\mu\mapsto\mu/m$ is increasing, the solution $\mu_0\in(-m,m)$ of the latter equation (which exists by assumption) is unique, and $F_V(\mu)<\mu/m$ for $\mu>\mu_0$. Therefore, 
\[
\eta(|\Phi(z)|)\,  F_V\left(\,\im \, \sqrt{z^2-m^2}\right)<1,\quad |z|<\sqrt{m^2-\mu_0^2},
\]
and hence $z\notin\sigma(H)$ by the first part of the Theorem.
\end{proof}

\begin{remark}
Using different factorizations of $V$, one infers from the proof of Theorem \ref{thm. slowly decaying 2} that
\[
\eta(|\Phi(z)|)\, \inf\left\{F_{A'^2}(\im\,\sqrt{z^2-m^2})^{1/2}\cdot F_{B'^2}(\im\,\sqrt{z^2-m^2})^{1/2}\right\}<1\implies  z\in\rho(H),
\]
where the infimum is taken over all factorizations $V=B'A'$.
\end{remark}

Theorem \ref{thm. slowly decaying 1} enables us to obtain eigenvalue bounds in terms of higher $L^p$-norms of the potential $V$.

\begin{corollary}\label{cor. higher Lp}
Suppose $V_{ij}\in L^p(\R)$ for $i,j=1,2$ and some $p\in(1,\infty)$, and set
\[
\|V\|_p:=\left(\int_{\R}\|V(x)\|^p \rd x\right)^{1/p}.
\]
Let $z\in\rho(H_0)=\C\setminus\big((-\infty,-m]\cup [m,\infty)\big)$ and  let $\eta$, $\Phi$ be defined as in \eqref{eq. etaofphi(z)}.
If
\beq\label{eq. Lp condition on V}
\eta(|\Phi(z)|)\,\left(\frac{2(p-1)}{p}\right)^{(p-1)/p} \, \left(\im\,\sqrt{z^2-m^2}\right)^{-(p-1)/p}\,\|V\|_p<1,
\eeq
then $z\notin\sigma(H)$.
\end{corollary}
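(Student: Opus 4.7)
The plan is to derive this corollary directly from Theorem \ref{thm. slowly decaying 2} by controlling the kernel integral $F_V(s)$ via H\"older's inequality, which is the natural way to pass from an $L^1$-type convolution bound to an $L^p$-estimate on the potential.

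First, for fixed $s>0$ and $y\in\R$, apply H\"older's inequality with conjugate exponents $p$ and $q=p/(p-1)$ to obtain
$$\int_{\R}\|V(x)\|\,\e^{-s|x-y|}\rd x\leq \|V\|_p\,\Bigl(\int_{\R}\e^{-qs|x-y|}\rd x\Bigr)^{1/q}=\|V\|_p\,\Bigl(\frac{2}{qs}\Bigr)^{1/q}.$$
Taking the supremum over $y\in\R$ and using $1/q=(p-1)/p$ and $2/q=2(p-1)/p$ gives
$$F_V(s)\leq \Bigl(\frac{2(p-1)}{p}\Bigr)^{(p-1)/p}\,s^{-(p-1)/p}\,\|V\|_p.$$

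Second, specialize to $s=\im\sqrt{z^2-m^2}$, which is strictly positive for $z\in\rho(H_0)=\C\setminus((-\infty,-m]\cup[m,\infty))$ by the branch choice of the square root in \eqref{zeta, k}. Multiplying both sides by $\eta(|\Phi(z)|)$ shows that the left-hand side of the hypothesis \eqref{eq. Lp condition on V} is an upper bound for $\eta(|\Phi(z)|)\,F_V(\im\sqrt{z^2-m^2})$. Consequently, the assumption \eqref{eq. Lp condition on V} implies the condition \eqref{eq. condition on F} of Theorem \ref{thm. slowly decaying 2}, which yields $z\notin\sigma(H)$, as required.

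The argument is essentially a one-line application of H\"older's inequality plugged into the $F_V$-bound of Theorem \ref{thm. slowly decaying 2}; there is no real obstacle. The only mildly delicate point is that Theorem \ref{thm. slowly decaying 2} is stated under the ambient hypothesis $V_{ij}\in L^1(\R)+L^{\infty}_0(\R)$, while the corollary assumes only $V_{ij}\in L^p(\R)$; however, the Schur-test proof of Theorem \ref{thm. slowly decaying 2} relies solely on the finiteness of $F_V(s)$ (which the above estimate confirms) together with the Birman--Schwinger framework set up in Section \ref{section Construction of the perturbed operator}, so it applies without change in the present setting.
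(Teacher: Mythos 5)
Your proof is correct and follows essentially the same route as the paper: the paper also deduces the corollary by combining the bound $\eta(|\Phi(z)|)F_V(\im\sqrt{z^2-m^2})<1\Rightarrow z\notin\sigma(H)$ with the estimate $F_V(s)\leq\bigl(\tfrac{2(p-1)}{p}\bigr)^{(p-1)/p}s^{-(p-1)/p}\|V\|_p$, citing \cite[Corollary 2.17]{DaNa02} for the latter instead of writing out the (identical) H\"older computation as you do. Note in passing that the paper's proof refers to Theorem \ref{thm. slowly decaying 1}, which appears to be a typo for Theorem \ref{thm. slowly decaying 2} since the $F_V$ bound belongs to the latter; your invocation of Theorem \ref{thm. slowly decaying 2} is the correct one, and the applicability to $V_{ij}\in L^p(\R)$ is justified simply by the containment $L^p(\R)\subset L^1(\R)+L^\infty_0(\R)$ recorded in the remark following Theorem \ref{abstract theorem}.
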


\begin{proof}
This is a consequence of Theorem \ref{thm. slowly decaying 1} and the inequality
\[
F_V(s)\leq \left(\frac{2(p-1)}{p}\right)^{(p-1)/p}\!s^{-(p-1)/p}\,\,\|V\|_p \,,
\]
see \cite[Corollary 2.17]{DaNa02}.
\end{proof}

Although the conditions in the above theorems seem to be very complicated, they may still provide explicit eigenvalue bounds as the following example shows.

\begin{example}
Let $\mu\in\C$, $\re\,\mu\neq 0$, and consider the massless Dirac operator $H_{\mu}=H_0+V_{\mu}$ with potential
\[
V_{\mu}(x)=\frac{2\mu}{\sinh(2\mu x+\I)}\,\begin{pmatrix}1&0\\0&-1\end{pmatrix},\quad x\in\R, 
\]
see \cite{Syroid87}. Since
\[
\|V_{\mu}\|_p^p=(2|\mu|)^{p-1}\int_{\R} \frac{1}{|\sinh(\e^{\I\arg(\mu)}x+\I)|^{p}}\,\rd x
\]
and $\eta(|\Phi(z)|)=1$ for $m=0$ by \eqref{eq. etaofphi(z)}, Corollary \ref{cor. higher Lp} 
implies that for every $p>1$, all eigenvalues of $H_{\mu}$ are contained in the strip
%
%
\beqnt 
\sigma_{\rm d}(H_{\mu})\subset\SSet{z\in\C}{|\im\,z|\leq |\mu|\,\frac{4(p-1)}{p}\left(\int_{\R} \frac{1}{|\sinh(\e^{\I\arg(\mu)}x+\I)|^{p}}\,\rd x\right)^{1/(p-1)}}.
\eeqnt
For $p=1$, one can check that 
\[
\|V_{\mu}\|_1=\int_{\R} \frac{1}{|\sinh(\e^{\I\arg(\mu)}x+\I)|}\,\rd x \geq \int_{\R} \frac{1}{|\sinh(x+\I)|}\,\rd x \,\, (\approx 3.4184)
\]
is greater than one (and independent of $|\mu|$) so that Theorem \ref{thm. 1} cannot exclude the occurrence of nonreal eigenvalues.
In fact, it was shown in \cite{Syroid87} that $H_{\mu}$ does have the nonreal eigenvalue $\I\mu$.
%
%
\end{example}

\medskip
The result of Corollary \ref{cor. higher Lp} may also be used to prove that $H$ is similar to a block diagonal matrix operator if the $L^p$-norm is sufficiently small and $p\in[2,\infty]$. For more results on block-diagonalization of 
Dirac operators as well as abstract Hilbert space operators, the reader is referred to \cite{CueDiag}.

\begin{theorem}\label{thm. blockdiag}
Let $m>0$, $V_{ij}\in L^p(\R)$ for $i,j=1,2$ and some $p\in[2,\infty)$. If 
\beq\label{eq. condition on V for blockdiag}
\|V\|_p <\left(\frac{mp}{2(p-1)}\right)^{(p-1)/p},
\eeq
then $H$ is similar to a block-diagonal operator,
\[
SHS^{-1}=\begin{pmatrix}H_+&0\\0&H_-\end{pmatrix},\quad \sigma(H_{\pm})=\sigma(H)\cap\set{z\in\C}{\pm\, \re \,z>0}.
\]
\end{theorem}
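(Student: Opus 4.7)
The strategy combines Corollary \ref{cor. higher Lp} (to rule out spectrum on the imaginary axis) with a Riesz-Dunford projection construction to decouple the right- and left-half-plane parts of $\sigma(H)$.

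\emph{Step 1: $\I\R\subset\rho(H)$.} For $z=\I\mu$, $\mu\in\R$, one has $\sqrt{z^2-m^2}=\I\sqrt{\mu^2+m^2}$, so $\im\sqrt{z^2-m^2}=\sqrt{\mu^2+m^2}\geq m$, and, using $|z|^2+m^2=|z^2-m^2|=\mu^2+m^2$, the formula \eqref{eq. etaofphi(z)} gives $\eta(|\Phi(\I\mu)|)=1$. Condition \eqref{eq. Lp condition on V} of Corollary \ref{cor. higher Lp} then reduces at $z=\I\mu$ to
\[
\bigl(\tfrac{2(p-1)}{p}\bigr)^{(p-1)/p}(\mu^2+m^2)^{-(p-1)/(2p)}\|V\|_p<1,
\]
which is strongest at $\mu=0$ and there becomes exactly hypothesis \eqref{eq. condition on V for blockdiag}. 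Hence $\I\mu\in\rho(H)$ for every $\mu\in\R$; together with \eqref{eq. invariance essential spectrum}, this splits $\sigma(H)=\sigma_+\dotcup\sigma_-$ with $\sigma_\pm:=\sigma(H)\cap\{\pm\re z>0\}$.

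\emph{Step 2: Riesz projections and perturbative estimate.} Because $\sigma_+$ and $\sigma_-$ are separated by $\I\R\subset\rho(H)$, applying the Dunford-Riesz calculus to the locally constant function $z\mapsto\frac12(1+\sgn(\re z))$ on a neighbourhood of $\sigma(H)$ produces a bounded projection $P_+$ commuting with $H$ and satisfying $\sigma(H|_{\Ran P_+})=\sigma_+$; its unperturbed counterpart is the orthogonal projection $P_+^{(0)}:=\chi_{[m,\infty)}(H_0)$. The Dunford calculus combined with the second resolvent identity yields, as an improper integral along $\I\R$,
\[
P_+-P_+^{(0)}=\frac{1}{2\pi\I}\int_{-\I\infty}^{+\I\infty}(H_0-z)^{-1}V(H-z)^{-1}\,\rd z,
\]
and using the factorization $V=BA$ of \eqref{eq. standard factorization} together with the Birman-Schwinger identity $A(H-z)^{-1}=(I+Q(z))^{-1}A(H_0-z)^{-1}$, the integrand rewrites as $(H_0-z)^{-1}B(I+Q(z))^{-1}A(H_0-z)^{-1}$. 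Corollary \ref{cor. higher Lp} controls the middle factor uniformly on $\I\R$, while Hilbert-Schmidt-type $L^p$ bounds on $A(H_0-\I\mu)^{-1}$ and $(H_0-\I\mu)^{-1}B$ in the style of \eqref{eq. norm of AR0 and R0B} supply enough decay in $|\mu|$ to make the integral converge absolutely; the restriction $p\geq 2$ enters exactly as the integrability threshold. A quantitative version of these estimates, under \eqref{eq. condition on V for blockdiag}, gives $\|P_+-P_+^{(0)}\|<1$.

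\emph{Step 3: Intertwining similarity.} Set $S:=P_+^{(0)}P_+ + (I-P_+^{(0)})(I-P_+)$. A direct algebraic manipulation reduces $S-I$ to $(P_+^{(0)}-P_-^{(0)})(P_+-P_+^{(0)})=\sgn(H_0)(P_+-P_+^{(0)})$, whence $\|S-I\|\leq\|P_+-P_+^{(0)}\|<1$, so that $S$ is boundedly invertible by Neumann series. The intertwining $SP_+=P_+^{(0)}S$ is then an immediate algebraic consequence, which gives $S^{-1}P_+^{(0)}S=P_+$. Since $H$ commutes with $P_+$, the conjugate $SHS^{-1}$ commutes with $P_+^{(0)}$ and thus takes the block-diagonal form asserted in the theorem on $\H=\Ran P_+^{(0)}\oplus\Ran P_-^{(0)}$, with blocks $H_\pm$ similar to $H|_{\Ran P_\pm}$ and hence spectra $\sigma_\pm$.

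The principal obstacle is Step 2, specifically the quantitative bound $\|P_+-P_+^{(0)}\|<1$: it requires careful Hilbert-Schmidt-type $L^p$ estimates to ensure absolute convergence of the imaginary-axis resolvent integral, which is precisely where the hypothesis $p\in[2,\infty)$ becomes essential; the remaining steps are an application of Corollary \ref{cor. higher Lp} and standard projection algebra.
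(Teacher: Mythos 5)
Your outline differs from the paper's at a decisive point: the paper does \emph{not} establish the perturbative bound $\|P_+-P_+^{(0)}\|<1$. Instead, it introduces the holomorphic family $H(\chi)=H_0+\chi V$, $|\chi|<1$, shows that the corresponding Riesz projections $P(\chi)$ form a \emph{bounded-holomorphic family}, and then invokes Kato's global transformation-function construction \cite[II.4.2]{Ka}, which yields a similarity between $P(0)$ and $P(1)$ by integrating an operator-valued ODE along the path from $\chi=0$ to $\chi=1$; closeness of the two endpoint projections is never needed. Your route substitutes for this the classical Sz.-Nagy formula $S=P_+^{(0)}P_+ + (I-P_+^{(0)})(I-P_+)$, which only works when $\|P_+-P_+^{(0)}\|<1$. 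That inequality is asserted but not established, and there is no evident way to derive it from \eqref{eq. condition on V for blockdiag}: the operator-norm estimate for the contour integral involves the relative-boundedness constant $\delta_m$ from the Heinz inequality (see \eqref{eq. deltam 1}--\eqref{eq. deltam 2}), which is not explicit, so your Step 2 has a genuine unfilled gap.

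A second issue is the assertion that the contour integral converges \emph{absolutely} in operator norm and that $p\geq 2$ enters as the integrability threshold. At $z=\I t$ one has $\||H_0|^{1/p}R_0(\I t)\|\sim C_p|t|^{1/p-1}$ for large $|t|$, so the integrand $\|R_0(\I t)B\|\,\|(I+Q(\I t))^{-1}\|\,\|AR_0(\I t)\|$ decays like $|t|^{2/p-2}$; at $p=2$ this is $|t|^{-1}$, which is \emph{not} integrable. The paper circumvents this by working with quadratic forms: it tests the integrand against fixed $f,g\in\H$, applies Cauchy--Schwarz, and invokes the spectral theorem to bound $\int_{-R}^R\||H_0|^{1/p}R_0(\I t)f\|^2\rd t\leq\pi m^{2/p-1}\|f\|^2$ uniformly in $R$. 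This establishes convergence in the weak (hence strong, by uniformity in $g$) operator topology without requiring absolute norm-integrability, and in particular covers the borderline case $p=2$. The true role of the restriction $p\in[2,\infty)$ is to guarantee $H_0$-boundedness of $V$ (via $Y=A^p$ with $Y_{ij}\in L^2$), not integrability of the contour integral. Finally, your Step 3 is also not quite what the paper does: after obtaining the similarity between $P(\chi)$ and $P(0)$, the paper still has to verify that $P(\chi)$ is in fact the spectral projection for the right half plane (using $\|(H-\I t)^{-1}\|\to 0$ as $|t|\to\infty$ together with \cite[Theorem 3.1]{GGK1}), a step your proposal omits because it takes the Riesz-Dunford identification for granted on an unbounded spectral component.
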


\begin{proof}
If $z=\I\,t$, $t\in\R$, then \eqref{eq. Lp condition on V} is less than one, i.e.\
\beq\label{eq. Qit}
\|Q(\I\,t))\|<\left(\frac{2(p-1)}{p}\right)^{(p-1)/p}\!\left(\sqrt{t^2+m^2}\right)^{-(p-1)/p}\left(\frac{mp}{2(p-1)}\right)^{(p-1)/p} \leq 1,
\eeq
hence $\I\R\subset\rho(H)$. Let again $A:=|V|^{1/2}$, $B:=U|V|^{1/2}$, and set $Y:=A^p$. Since $A_{ij}\in L^{2p}(\R)$, it follows that $Y_{ij}\in L^2(\R)$, hence $Y$ is $H_0$-bounded (see for instance \cite[Satz 17.7]{Weid2}). By Heinz' inequality, $Y^{\al}$ is $|H_0|^{\al}$-bounded for any $\al\in (0,1)$. In particular, for $\al=1/p$, $A$ is $|H_0|^{1/p}$-bounded. Thus, since $|H_0|^{1/p}\geq (m)^{1/p}$, there exists a constant $\delta_m<\infty$ such that for all $ z\in\rho(H_0)$
\beq\label{eq. deltam 1}
\|AR_0(z)\|\leq \delta_m\, \||H_0|^{1/p}R_0(z)\|.
\eeq
Analogously, one can show that
\beq\label{eq. deltam 2}
\|R_0(z)B\|\leq \delta_m\, \||H_0|^{1/p}R_0(z)\|.
\eeq
For $\chi\in\C$, $|\chi|<1$, let $H(\chi):=H_0+\chi V$. By inspection of the resolvent of $H(\chi)$,
\[
(H(\chi)-z)^{-1}=R_0(z)-\chi R_0(z)B\left(I_{\K}+\chi Q(z)\right)^{-1}AR_0(z),
\]
it is easily seen that $H(\chi)$, $|\chi|<1$, is a holomorphic family. For $f\in\H$, we define
\beq\label{eq. projection}
P(\chi)f:=\frac{1}{2}\,f+\frac{1}{2\pi}\lim_{R\to\infty}\int_{-R}^R (H(\chi)-\I t)^{-1}f\rd t, \quad |\chi|<1.
\eeq
We shall show that the limit exists and that $P(\chi)$ is a bounded-holomorphic family of projections.
By \cite[II.4.2]{Ka}, it then follows that there exists a bounded-holomorphic family of isomorphisms $U(\chi)$ 
such that
\[
U(\chi)P(\chi)U(\chi)^{-1}=P(0),\quad \chi\in\C,\quad |\chi|<1.
\]
On the other hand, by the standard Foldy-Wouthuysen transformation (i.e.\ diagonalizing $H_0$ in momentum space, see e.g.\ \cite{Th}), there exists a unitary operator $\widetilde{U}$ such that
\[
\widetilde{U}P(0)\widetilde{U}^{-1}=\begin{pmatrix}1&0\\0&0\end{pmatrix}.
\]
The claim thus follows with $S:=\widetilde{U}U(1)$.

Since $H_0$ is selfadjoint, the right hand side of \eqref{eq. projection} exists for $\chi=0$ and coincides with the spectral projection onto the positive spectral subspace of $H_0$, by the spectral theorem. It is thus sufficient to show the convergence of the integral
\[
\lim_{R\to\infty}\int_{-R}^R \left(\left(H(\chi)-\I t)^{-1}-R_0(\I t)\right)f,g\right)\rd t
\]
uniformly in $g\in\H$, $\|g\|=1$, and locally uniformly in $\chi\in\C$, $|\chi|<1$. Indeed, since by \eqref{eq. Qit},
\[
q_0:=\sup_{t\in\R}\|Q(\I t)\|<1,
\]
the estimates \eqref{eq. deltam 1}, \eqref{eq. deltam 2} imply, for $|\chi|<1$,
\begin{align*}
&\int_{-R}^R \left|\left(\left(H(\chi)-\I t)^{-1}-R_0(\I t)\right)f,g\right)\right|\rd t\\
&\leq(1-q_0)^{-1}\int_{-R}^R \|AR_0(\I t)f\|\,\|R_0(\I t)Bg\|\rd t\\
&\leq(1-q_0)^{-1}\int_{-R}^R \||H_0|^{1/p}R_0(\I t)f\|\, \||H_0|^{1/p}R_0(\I t)g\|\rd t\\
&\leq(1-q_0)^{-1}\left(\int_{-R}^R \||H_0|^{1/p}R_0(\I t)f\|^2\rd t\right)^{1/2}\left(\int_{-R}^R \||H_0|^{1/p}R_0(\I t)g\|^2\rd t\right)^{1/2}.
\end{align*}
Denoting by $E(\cdot)$ the spectral function of $H_0$, we can estimate
\begin{align*}
\int_{-R}^R \||H_0|^{1/p}R_0(\I t)f\|^2\rd t&\leq \int_{\sigma(H_0)}\int_{-\infty}^{\infty}\frac{|s|^{2/p}}{s^2+t^2}\,\rd t\rd\|E(s)f\|^2\\
&=\pi\int_{\sigma(H_0)}|s|^{(2/p)-1} \rd\|E(s)f\|^2
\leq \pi (m)^{(2/p)-1}\,\|f\|^2.
\end{align*}

The fact that $P(\chi)$ is a spectral projection corresponding to the right half plane may be deduced from \cite[Theorem 3.1]{GGK1} in combination with the residue theorem, see also \cite[Theorem 1.1]{LT}, \cite[Theorem 2.4]{CueDiag}. In order to apply the latter, it  remains to be shown that 
\beq\label{eq. resolvent norm to zero}
\lim_{t\to\infty}\|(H-\I t)^{-1}\|=0.
\eeq
By the spectral theorem for $H_0$, 
\begin{align*}
\|(H-\I t)^{-1}\|&\leq \|(H_0-\I t)^{-1}\|+\|(H-\I t)^{-1}-(H_0-\I t)^{-1}\|\\
&\leq \frac{1}{|t|}+(1-q_0)^{-1}\||H_0|^{1/p}R_0(\I t)\|^2\leq \frac{1}{|t|}+\frac{C}{|t|^{1-1/p}}
\end{align*}
for some $C>0$. This proves \eqref{eq. resolvent norm to zero}.
\end{proof}

\begin{remark}
Similar estimates as in \eqref{eq. Lp condition on V} have been derived in \cite{CueTre12} by a more abstract approach. For example, for $m>0$ and $p=2$, the results of \cite{CueTre12} imply that
\beq\label{eq. spec. incl. CueTre12}
 \sigma(H)\subset\SSet{z\in\C}{|\im\, z|\leq 2\,\|V\|_2^2\,(1+|z|)^{1/2}}.
\eeq
In comparison, \eqref{eq. Lp condition on V} above implies that  
\beq\label{eq. compare to spec. incl. CueTre12}
\sigma(H)\subset\SSet{z\in\C}{\im\sqrt{z^2-m^2}\leq \eta(|\Phi(z)|)^2 \|V\|_2^2}.
\eeq
Asymptotically, \eqref{eq. spec. incl. CueTre12} and \eqref{eq. compare to spec. incl. CueTre12} yield that for $z\in\sigma(H)$
\[
|\im\,z|\leq 2 \,\|V\|_2^2 \,|z|^{1/2}\quad \mbox{and}\quad |\im\,z|\leq \,\|V\|_2^2,\quad |z|\to\infty,
\]
respectively. The second estimate is clearly superior, which is not surprising since the results of \cite{CueTre12} are of much more general nature. They are applicable to Dirac operators in arbitrary dimension as well as to abstract Hilbert space operators.
\end{remark}

\section{Embedded eigenvalues and resonances}\label{section Embedded eigenvalues and resonances}

In this section we show how the previous results may be applied to locate the embedded eigenvalues and resonances of Dirac operators with Hermitian potentials using the method of complex scaling. To this end, we assume that $V$ is dilation-analytic.

For simplicity, we restrict ourselves to the case $\|V\|_1<1$ (see Theorem \ref{thm. 1}); the formulation and proof of analogous results using Theorems \ref{thm. slowly decaying 1} and \ref{thm. slowly decaying 2} is straightforward. 
Moreover, we use a boundedness assumption on $V$ which can be relaxed using the construction of the extension $H$ of $H_0+V$ in Section~\ref{section Construction of the perturbed operator}.

Let $U(\theta)$ be the unitary dilation in $L^2(\R)\otimes\C^2$, given by
\[
(U(\theta)f)(x):=\e^{\theta/2}f(\e^{\theta}x),\quad x,\theta\in\R.
\]
For $\al\in(0,\pi/2)$ let $\Sigma_{\al}:=\set{z\in\C\setminus\{0\}}{|\arg(z)|<\al}$, where $-\pi<\arg(z)<\pi$.


\begin{hypothesis}\label{hypothesis V resonance}
Assume that there exists $\al\in(0,\pi/2)$ such that:
\begin{itemize}
\item[\rm i)] $V:\Sigma_{\al}\cup(-\Sigma_{\al})\to\C^{2\times 2}$ is a bounded analytic function;
\item[\rm ii)] The restriction of $V$ to the real axis is Hermitian-valued;
\item[\rm iii)] For each $\beta\in (0,\al)$ the functions $V(\e^{\I\,\varphi}\cdot)$, $|\varphi|\leq \be$, are in $L^1(\R,\C^{2\times 2})$ with uniformly bounded $L^1$-norms.
\end{itemize}
\end{hypothesis}

We define the complex-dilated operators
\begin{align*}
H_0(\theta)&:=U(\theta)H_0 U(\theta)^{-1}=-\I\e^{-\theta}\frac{\rd}{\rd x}\sigma_1+m\sigma_3,\\
V(\theta)&:=U(\theta)V U(\theta)^{-1}=V(\e^{\theta}\cdot),\\[1mm]
H(\theta)&:=U(\theta)(H_0+V)U(\theta)^{-1}=H_0(\theta)+V(\theta).
\end{align*}
It is straightforward to check that $H_0(\theta)$ has an extension to an entire family of type (A) in the sense of Kato \cite[VII.2]{Ka}, see e.g.\ \cite[Lemma 1]{We73}.

\begin{proposition}\label{prop. resonances}
Assume that Hypothesis \rref{hypothesis V resonance} is satisfied for some $\al\in(0,\pi/2)$. Then the following hold:
\begin{itemize}
\item[\rm i)] $V(\theta)$ has an extension to an analytic bounded operator-valued function in the strip $S_{\al}:=\set{\theta\in\C}{|\im \,\theta|<\alpha}$;
\item[\rm ii)] For $\mu\in\R$, $|\mu|$ sufficiently large, $\I\,\mu\in\rho(H(\theta))$ for all $\theta\in S_{\al}$, and for $\I\mu\in\rho(H(\theta))$ fixed,  $(H(\theta)-\I\,\mu)^{-1}$ is an analytic bounded operator-valued function in $S_{\al}$;
\item[\rm iii)] $U(\varphi)H(\theta)U(\varphi)^{-1}=H(\theta+\varphi)$ for all $\varphi\in\R$, $\theta\in S_{\al}$;
\item[\rm iv)] $\sigma(H(\theta))$ depends only on $\im\,\theta$;
\item[\rm v)] $\sigma_{\rm e}(H_0(\theta))=\set{\pm\sqrt{\e^{-2\theta}p^2+m^2}}{p\in\R}$;
\item[\rm vi)] $\sigma_{\rm d}(H(\theta))\cap\R=\sigma_{\rm p}(H)\setminus\{-m,m\}$;
\item[\rm vii)] For $\im\,\theta\in (0,\al)$, all nonreal eigenvalues of $H(\theta)$ lie in the region
\[
D_{\theta}:=\set{\pm\sqrt{\e^{-2\omega}p^2+m^2}}{p\in\R,\, \im\,\omega\in[0,\im\,\theta]},
\]
see Fig.\ {\rm 2}. If $\,0<\im\,\theta_1<\im\,\theta_2<\al$, then $\sigma_{\rm d}(H(\theta_1))\subset \sigma_{\rm d}(H(\theta_2))$.
\item[\rm viii)] For $\beta\in (0,\al)$, the function $\varphi\mapsto\|V(\e^{\I\varphi}\cdot)\|_1$ is logarithmically convex in the interval $[-\be,\be]$.
\end{itemize}
\end{proposition}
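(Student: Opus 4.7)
I would treat the eight parts in roughly increasing order of difficulty.

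Parts (i), (iii)--(v) are consequences of the definitions. For (i), since $\theta\in S_\al$ gives $\e^\theta\in\Sigma_\al\cup(-\Sigma_\al)$, Hypothesis \rref{hypothesis V resonance} (i),(iii) yield boundedness plus pointwise analyticity of $V(\e^\theta x)$; dominated convergence then upgrades this to operator-valued analyticity of $\theta\mapsto V(\theta)$. Item (iii) is the group law $U(\varphi)V(\e^\theta\cdot)U(\varphi)^{-1}=V(\e^{\theta+\varphi}\cdot)$ together with the analogous identity for $H_0$, and (iv) is immediate from (iii) since $U(\varphi)$ is unitary for real $\varphi$. Item (v) follows by Fourier transforming $H_0(\theta)$ into multiplication by the matrix symbol $\e^{-\theta}p\sigma_1+m\sigma_3$, whose pointwise eigenvalues are $\pm\sqrt{\e^{-2\theta}p^2+m^2}$.

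For (ii), I would adapt the Birman-Schwinger estimate of Theorem \rref{thm. 1}. The resolvent kernel of $H_0(\theta)$ has the same structure as $R_0(x,y;z)$, with $k(z)$ replaced by $\e^\theta k(z)$; for $\theta\in S_\al$, the condition $|\im\theta|<\pi/2$ preserves $\im(\e^\theta k(\I\mu))>0$, so the exponential factor has modulus at most one. Combined with the Cauchy-Schwarz step from Theorem \rref{thm. 1} and the uniform boundedness of $\|V(\e^\theta\cdot)\|_1$ on sub-strips (Hypothesis (iii)), this yields $\|Q_\theta(\I\mu)\|<1$ for $|\mu|$ large, hence $\I\mu\in\rho(H(\theta))$. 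Analyticity of $(H(\theta)-\I\mu)^{-1}$ in $\theta$ then follows from the Neumann series together with the fact that $H(\theta)$ is a holomorphic family of type (A).

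Items (vi) and (vii) would follow the Aguilar-Balslev-Combes spectral-deformation argument adapted to Dirac operators (cf.\ \cite{Seba88,We73}). For (vi), matrix elements $(\phi,(H(\theta)-z)^{-1}\psi)$ for dilation-analytic $\phi,\psi$ extend analytically from real $\theta$ into $S_\al$, so real discrete eigenvalues of $H(\theta)$ must be eigenvalues of $H$ itself, and conversely every eigenvalue of $H$ outside the thresholds $\pm m$ lies in $\rho(H_0)$ and persists under deformation. For (vii), the curves from (v) sweep out precisely the region $D_\theta$ as $\im\omega$ ranges over $[0,\im\theta]$; analyticity of eigenvalues in $\theta$, combined with (iv), forces any nonreal discrete eigenvalue of $H(\theta)$ to lie in $D_\theta$, and the monotone inclusion $\sigma_{\rm d}(H(\theta_1))\subset \sigma_{\rm d}(H(\theta_2))$ holds since any uncovered eigenvalue persists under further deformation.

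The main obstacle, and the only genuinely new step, is (viii). The plan is to apply Hadamard's three-lines theorem to the $L^1$-valued analytic function
\beqnt
F:S_\al\to L^1(\R,\Mat_{2\times 2}(\C)),\qquad F(\theta)(x):=\e^\theta\,V(\e^\theta x).
\eeqnt
A direct change of variable $y=\e^{\re\theta}x$ shows that $\|F(\sigma+\I\varphi)\|_{L^1}=\|V(\e^{\I\varphi}\,\cdot)\|_1=:G(\varphi)$, independent of $\sigma$ and uniformly bounded on every closed sub-strip $|\im\theta|\leq\be<\al$ by Hypothesis (iii). Analyticity of $F$ as an $L^1$-valued map is obtained by verifying weak analyticity against the dual $L^\infty(\R,\Mat_{2\times 2})$ and invoking Dunford's theorem; this is where Hypothesis (i) enters via a Cauchy integral representation for the derivative of $V$. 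Then, for any $\ell$ in the unit ball of the dual, the scalar function $h_\ell(\theta):=\ell(F(\theta))$ is bounded analytic on $S_\al$, and the scalar three-lines theorem gives convexity of $\varphi\mapsto\log\sup_\sigma|h_\ell(\sigma+\I\varphi)|$ on $[-\be,\be]$. Taking the supremum over $\ell$ in the unit ball yields convexity of $\log G$ on $[-\be,\be]$, which is the asserted logarithmic convexity of $\varphi\mapsto\|V(\e^{\I\varphi}\,\cdot)\|_1$.
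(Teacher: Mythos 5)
Most of your proposal matches the paper's argument: (i) via weak/pointwise analyticity, (iii)--(iv) via the group law and unitarity, (v) by Fourier transform, (vi)--(vii) via the Aguilar--Balslev--Combes spectral deformation argument (the paper outsources this to S\v{e}ba's theorem, \cite[Theorem 1]{Seba88}), and (viii) by applying Hadamard's three-lines theorem to the $L^1$-valued analytic map $F(\theta)=\e^\theta V(\e^\theta\cdot)$, exactly as in the paper.

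There is, however, a genuine gap in your argument for (ii). You invoke the Cauchy--Schwarz estimate of Theorem \ref{thm. 1}, which gives $\|Q_\theta(\I\mu)\|\leq\eta(|\Phi(\cdot)|)\,\|V(\e^\theta\cdot)\|_1$. For $z=\I\mu$ one has $|\Phi(\I\mu)|=1$, so $\eta=1$; the same calculation carries over to the dilated operator with complex mass $m\e^\theta$. Consequently this bound does \emph{not} decay as $|\mu|\to\infty$ and cannot yield $\|Q_\theta(\I\mu)\|<1$ for large $|\mu|$ unless one already has $\|V(\e^\theta\cdot)\|_1<1$, which is not part of Hypothesis \ref{hypothesis V resonance}. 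Moreover, Hypothesis \ref{hypothesis V resonance} (iii) gives uniform $L^1$-bounds only on closed sub-strips $|\im\theta|\leq\be<\al$, whereas the proposition asserts $\I\mu\in\rho(H(\theta))$ for \emph{all} $\theta\in S_\al$ simultaneously. The paper's proof avoids both problems by using Hypothesis \ref{hypothesis V resonance} (i) rather than (iii): since $V$ is bounded on $\Sigma_\al\cup(-\Sigma_\al)$, the multiplication operators $V(\theta)$ satisfy $\|V(\theta)\|\leq M$ uniformly on all of $S_\al$; $H_0(\theta)$ is normal, so $\|(H_0(\theta)-\I\mu)^{-1}\|=\dist(\I\mu,\sigma(H_0(\theta)))^{-1}$, and since $\sigma(H_0(\theta))$ lies in the sector $|\arg w|\leq\al<\pi/2$ this distance grows linearly in $|\mu|$ uniformly over $S_\al$; a Neumann series then gives $\I\mu\in\rho(H(\theta))$ for $|\mu|$ large, uniformly in $\theta\in S_\al$, together with analyticity of the resolvent. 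If you prefer a Birman--Schwinger route, you would at least need to replace the Cauchy--Schwarz bound by the Hilbert--Schmidt estimate \eqref{eq. norm of AR0 and R0B}, which does decay like $(\im k(\I\mu))^{-1/2}$, but even then you would only obtain the conclusion on sub-strips.
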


\begin{figure}[ht]
\begin{pspicture*}(-5,-2.5)(5,2.5)
\psline{<->}(-5,0)(5,0)
\psline{<->}(0,-2.5)(0,2.5)
\psplot[plotpoints=200,%
plotstyle=curve,linecolor=red,fillstyle=vlines,fillcolor=lightgray]%
{1}{10}
{
x x mul 1 sub sqrt -.5 mul 
}
\psline[linestyle=none,fillstyle=vlines,fillcolor=lightgray](1,0)(7,0)(7,-3.5)(1,0)
\psplot[plotpoints=200,%
plotstyle=curve,linecolor=red,fillstyle=vlines,fillcolor=lightgray]%
{-1}{-10}
{
x x mul 1 sub sqrt .5 mul 
}
\psline[linestyle=none,fillstyle=vlines,fillcolor=lightgray](-1,0)(-7,0)(-7,3.5)(-1,0)
\rput(1,.2){$m$}
\rput(-1,-.2){$-m$}
\rput(-3,2){$\textcolor{red}{\sigma_{\rm e}(H(\theta))}$}
\rput(3,-2){$\textcolor{red}{\sigma_{\rm e}(H(\theta))}$}
\rput(-4,.5){\colorbox{white}{$D_{\theta}$}}
\rput(4,-.5){\colorbox{white}{$D_{\theta}$}}
\rput(3.5,-1){\psdot}
\rput(1.4,-.2){\psdot}
\rput(3,-.6){\psdot}
\rput(-2,.4){\psdot}
\rput(-4,1){\psdot}
\rput(.2,0){\psdot}
\rput(.6,0){\psdot}
\rput(-.5,0){\psdot}
\rput(-.5,0){\psdot}
\rput(-3,0){\psdot}
\rput(2,0){\psdot}
\rput(4,0){\psdot}
\end{pspicture*}
\caption{Eigenvalues of $H$ and the set $D_{\theta}$ enclosing resonances of $H$.}
\end{figure}
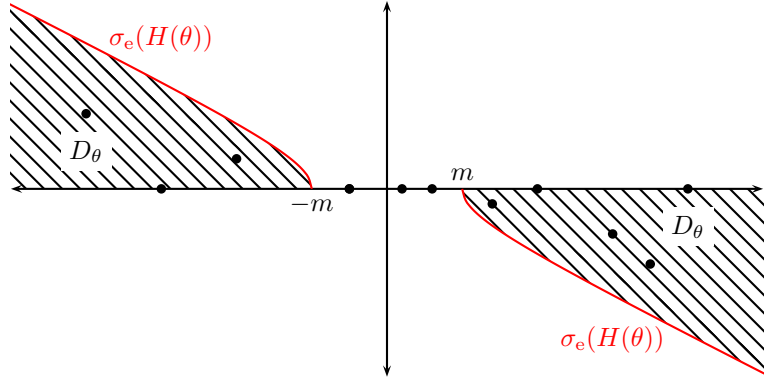

\begin{proof}
 i) Since $S_{\al}$ is mapped onto $\Sigma_{\al}$ under the mapping $\theta\mapsto \e^{\theta}$, it follows that $V(\theta)\in\LL(\H)$. 
It is easy to see that $V(\theta)$, $\theta\in S_{\al}$, is weakly analytic, and hence analytic in norm, see e.g.\ \cite[Theorem III.1.3.7]{Ka}.

ii) Since $V(\theta)$ is uniformly bounded in the operator norm, $\|V(\theta)\|\leq M<\infty$, the spectrum of $H(\theta)$ is contained in the $M$-neighbourhood of $\sigma(H_0(\theta))$ by the stability of bounded invertibility. Hence, $\I\,\mu\in\rho(H(\theta))$ for $|\mu|$ sufficiently large. The analyticity of $(H(\theta)-\I\,\mu)^{-1}$ follows from the formula
\[
(H(\theta)-\I\,\mu)^{-1}=(H_0(\theta)-\I\,\mu)^{-1}(I+V(\theta)(H_0(\theta)-\I\,\mu)^{-1})^{-1}
\]
and from the observation that $H_0(\theta)$ is a normal operator, whence for $|\mu|$ sufficiently large, $$\|(H_0(\theta)-\I\,\mu)^{-1}\|=\dist(\I\,\mu,\sigma(H_0(\theta))<1/M.$$ 

iii) is clearly valid for real $\theta$, and since both sides of the equation are analytic, the claim follows from the identity theorem.
iv) is a direct consequence of iii).

For the proof of v)-vii), we refer to \cite[Theorem 1]{Seba88}, compare also \cite[XIII.36]{RS4}. Unlike in \cite{Seba88}, we do not assume that $V$ is $H_0$-compact; however, as already mentioned in the introduction, since $V$ decays at infinity the resolvent difference of $H$ and $H_0$ is compact and thus their essential spectra are the same by \cite[Theorem~IX.2.4]{EE}. Since
\[
(H(\theta)-z)^{-1}-(H_0(\theta)-z)^{-1}=U(\theta)((H-z)^{-1}-(H_0-z)^{-1})U(\theta)^{-1},
\]
the same applies to the essential spectra of $H(\theta)$ and $H_0(\theta)$ and thus the proof of \cite[Theorem 1]{Seba88} carries through in the case considered here. 

viii) Let $g\in L^{\infty}(\R)$. Then 
\[
\int_{\R}V_{ij}(\e^{\theta}x)g(x)\,\rd x
\]
depends analytically on $\theta\in S_{\al}$ since on any compact subset $K\subset S_{\al}$ the absolute value of the integral is bounded by
\[
\rho\cdot\sup_{|\varphi|\leq\be}\|V(\e^{\I\,\varphi}\cdot)\|_1\cdot \|g\|_{\infty}\quad \mbox{where}\quad\rho:=\min_{\theta\in K}\e^{-\re\,\theta},\quad \be:=\max_{\theta\in K}|\im\,\theta|.
\]
Hence, the map $(\theta\mapsto V(\e^{\theta}\cdot)):S_{\al}\to L^1(\R,\C^{2\times 2})$ is weakly (and hence strongly) analytic. For $\beta\in (0,\al)$ consider the map 
\[
F:S_{\be}\to L^1(\R,\C^{2\times 2}),\quad F(\theta):=\e^{\theta}V(\e^{\theta}\cdot)
\]
which is analytic, continuous up to the boundary of $S_{\be}$, and uniformly bounded in $\overline{S_{\be}}$. The claim follows by applying Hadamard's three-lines theorem for analytic functions with values in a Banach space, see e.g.\ \cite[III.14]{DS1}, to $F$ and noting that $\|F(\I\,\varphi)\|_1=\|V(\e^{\I\,\varphi}\cdot)\|_1$.
\end{proof}

It may be shown, see \cite[Theorem 2]{Seba88}, that the resolvent $(H-z)^{-1}$ has a (many-sheeted) analytic continuation 
to the set $\rho(H_{\theta})$. The poles of the analytically continued resolvent are called the \emph{resonances} of $H$, and they are located precisely at the eigenvalues of $H_{\theta}$. We denote the set of resonances of $H$ by $\mathcal{R}(H)$. 

\begin{theorem}\label{thm. resonances}
Assume that $V$ satisfies Hypothesis \rref{hypothesis V resonance} with $\al\in(0,\pi/2)$.
\begin{itemize}
\item[\rm i)] If $\im\,\theta\in[0,\al)$ and
\begin{align*}
v_{\theta}&:=\inf_{\im\,\theta\leq\varphi<\al}\|V(\e^{\I\,\varphi}\cdot)\|_1<1,
\end{align*}
then the resonances of $H$ satisfy the inclusion
\beq\label{eq. resonance inclusion}
\mathcal{R}(H)\cap D_{\theta}\subset K_{mr_{\theta}}(mx_{\theta})\,\dotcup \,K_{mr_{\theta}}(-mx_{\theta})
\eeq
where
\beq\label{eq. xtheta rtheta}
x_{\theta}:=\sqrt{\frac{v_{\theta}^4-2 v_{\theta}^2+2}{4(1-v_{\theta}^2)}+\frac{1}{2}}, \quad r_{\theta}:=\sqrt{\frac{v_{\theta}^4-2 v_{\theta}^2+2}{4(1-v_{\theta}^2)}-\frac{1}{2}}.
\eeq
\item[\rm ii)] If $V$ is scalar-valued and sign-definite, then $v_{\theta}=\|V(\e^{\I\im\,\theta}\cdot)\|_1$ in {\rm i)}.
\item[\rm iii)] Assume that $\|V\|_1<1$. 
Then all eigenvalues of $H$ {\rm(}including the embedded ones{\rm)} are contained in the intervals
\beq\label{eq. embedded eigenvalues}
\big(-m(x_0+r_0),-m(x_0-r_0)\big)\,\dotcup\, \big(m(x_0-r_0),m(x_0+r_0)\big),
\eeq
where $x_0$, $r_0$ are given in \eqref{eq. x0 r0} {\rm(}i.e.\ \eqref{eq. xtheta rtheta} with $v_{\theta}=v_0=\|V\|_1)$.

\item[\rm iv)] If $m=0$ and $\|V\|_1<1$, then there are no resonances close to the real axis; more precisely,
if we set
\begin{align*}
\varphi_0:=\sup\set{\im\,\theta\in[0,\al)}{v_{\theta}<1}\,(>0),
\end{align*}
then
\[
\mathcal{R}(H)\cap\Set{\pm\sqrt{\e^{-2\omega}p^2+m^2}}{p\in\R,\, \im\,\omega\in[0,\varphi_0]}=\emptyset.
\]
\end{itemize}
\end{theorem}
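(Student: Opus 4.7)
The strategy is to extend the Birman--Schwinger argument of Theorem \rref{thm. 1} to the complex-dilated operators $H(\I\varphi)$, $\varphi\in[\im\theta,\al)$, and to combine it with the transfer results of Proposition \rref{prop. resonances}. The first step is to establish, by analytic continuation from real $\theta$, that the resolvent kernel of $H_0(\I\varphi)$ takes the form
\[
R_0^{\I\varphi}(x,y;z)=\e^{\I\varphi}\,M(x,y;z)\,\e^{\I\,\e^{\I\varphi}k(z)\,|x-y|},
\]
with the same $M$ as in Theorem \rref{thm. 1}, hence $\|R_0^{\I\varphi}(x,y;z)\|\leq\eta(|\Phi(z)|)\,\e^{-\im(\e^{\I\varphi}k(z))|x-y|}$. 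Repeating the Cauchy--Schwarz estimate from the proof of Theorem \rref{thm. 1} with $V$ replaced by $V(\e^{\I\varphi}\cdot)$ and using $\e^{-\im(\e^{\I\varphi}k(z))|x-y|}\leq 1$ for $z\in\rho(H_0(\I\varphi))$ gives $\|Q^{\I\varphi}(z)\|\leq\eta(|\Phi(z)|)\,\|V(\e^{\I\varphi}\cdot)\|_1$. The M\"obius transformation argument of Theorem \rref{thm. 1} then encloses the eigenvalues of $H(\I\varphi)$, whenever $\|V(\e^{\I\varphi}\cdot)\|_1<1$, in two disks of the form \eqref{eq. xtheta rtheta} with $v_\theta$ replaced by $\|V(\e^{\I\varphi}\cdot)\|_1$.

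For part~i), by Proposition \rref{prop. resonances}.iv) I may assume $\theta=\I\im\theta$, and by vii) $\sigma_{\rm d}(H(\theta))\subset\sigma_{\rm d}(H(\I\varphi))$ for every $\varphi\in[\im\theta,\al)$. Since the resonances of $H$ in $D_\theta$ coincide with the eigenvalues of $H(\theta)$ in $D_\theta$, the enclosure of the preceding paragraph applies to each such $\varphi$; as $x$ and $r$ are monotone in $\|V(\e^{\I\varphi}\cdot)\|_1$, optimising over $\varphi\in[\im\theta,\al)$ yields the enclosure with parameter $v_\theta$.

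Part~ii) follows from Proposition \rref{prop. resonances}.viii), which gives log-convexity of $\varphi\mapsto\|V(\e^{\I\varphi}\cdot)\|_1$ on $(-\al,\al)$, combined with the symmetry $\|V(\e^{-\I\varphi}\cdot)\|_1=\|V(\e^{\I\varphi}\cdot)\|_1$ (from Schwarz reflection applied to the Hermitian $V$) and the inequality $\|V\|_1=\bigl|\int_{\R}V(x)\,\rd x\bigr|\leq\|V(\e^{\I\varphi}\cdot)\|_1$ (from sign-definiteness together with a contour deformation, justified by Hypothesis \rref{hypothesis V resonance}.iii)). A symmetric log-convex function with minimum at $0$ is nondecreasing on $[0,\al)$, so the infimum defining $v_\theta$ is attained at the left endpoint $\varphi=\im\theta$. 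For iii), apply i) with any $\im\theta\in(0,\al)$ and let $\im\theta\to 0^+$: by Proposition \rref{prop. resonances}.vi) the real eigenvalues of $H(\I\varphi)$ are precisely $\sigma_{\rm p}(H)\setminus\{-m,m\}$ and in particular contain the embedded eigenvalues, while non-embedded eigenvalues of $H$ are already bounded by Theorem \rref{thm. 1}. Since $v_\theta\leq\|V\|_1$, the resulting enclosure is contained in the one with parameter $\|V\|_1$, and intersecting the disks with $\R$ (selfadjointness of $H$ forces real spectrum) produces the intervals \eqref{eq. embedded eigenvalues}. For iv) with $m=0$, the disks of i) have radius zero, so no resonance of $H$ lies in $D_{\I\varphi}$ whenever $v_\varphi<1$; a continuity argument (taking $\varphi\to\varphi_0^-$) extends the exclusion to the closed interval $[0,\varphi_0]$.

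The main technical obstacle is the explicit computation and pointwise norm estimate of the resolvent kernel of the non-normal operator $H_0(\I\varphi)$ and the verification that the Birman--Schwinger argument of Theorem \rref{thm. 1}---which relies only on the factorisation $V=BA$ and the pointwise bound $\|R_0(x,y;z)\|\leq\eta(|\Phi(z)|)$---carries over verbatim. A secondary difficulty is justifying the contour deformation used in ii), which hinges on the uniform decay of $V$ in the sectors $\Sigma_\al\cup(-\Sigma_\al)$ afforded by Hypothesis \rref{hypothesis V resonance}.iii).
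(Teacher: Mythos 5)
Your proposal follows essentially the same route as the paper: apply the Birman--Schwinger/M\"obius argument of Theorem \ref{thm. 1} to the complex-dilated operators, use Proposition \ref{prop. resonances} to transfer the enclosures back to resonances of $H$, deform the contour plus log-convexity for ii), and specialize by taking limits for iii) and iv). Two remarks on points of detail.

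\emph{Organization of part i).} Rather than computing the resolvent kernel of the non-normal $H_0(\I\varphi)$ directly, the paper multiplies by the unimodular factor $\e^{\I\varphi}$, so that $\e^{\I\varphi}H(\I\varphi)=-\I\frac{\rd}{\rd x}\sigma_1+m\e^{\I\varphi}\sigma_3+\e^{\I\varphi}V(\e^{\I\varphi}\cdot)$ is again a Dirac operator, now with complex mass $m\e^{\I\varphi}$ and with potential of $L^1$-norm $\|V(\e^{\I\varphi}\cdot)\|_1$; one then applies Theorem \ref{thm. 1} as a black box (modified only by replacing $m'r_\theta$ with $|m'|r_\theta$) and rotates the disks back. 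Your direct kernel computation is equivalent --- with the same branch of $k$, one indeed has $\zeta$ and $\eta$ unchanged and $\im(\e^{\I\varphi}k(z))>0$ on $\rho(H_0(\I\varphi))$ --- but the rotation trick avoids redoing the Cauchy--Schwarz estimate and makes the reduction to Theorem \ref{thm. 1} transparent.

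\emph{Imprecision in part iii).} The assertion ``$v_\theta\leq\|V\|_1$'' is false for fixed $\im\theta>0$: e.g.\ for sign-definite scalar $V$ one has (by your own part ii)) $v_\theta=\|V(\e^{\I\im\theta}\cdot)\|_1\geq\|V\|_1$. What is true, and suffices, is that $v_\theta$ is nondecreasing in $\im\theta$ with $\lim_{\im\theta\to 0^+}v_\theta\leq\|V\|_1$ by continuity of $\varphi\mapsto\|V(\e^{\I\varphi}\cdot)\|_1$. The paper sidesteps this altogether by applying Theorem \ref{thm. 1} to $H(\theta_n)$ with the explicit parameter $\|V(\e^{\I\varphi_n}\cdot)\|_1$ and a sequence $\varphi_n\to 0$, rather than invoking the infimum $v_\theta$ from part i); you should do the same, or replace the pointwise claim by the limiting one.

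Aside from these points the argument is sound, and parts ii) and iv) match the paper's proof (for ii), the symmetry $\|V(\e^{-\I\varphi}\cdot)\|_1=\|V(\e^{\I\varphi}\cdot)\|_1$ you invoke is correct but superfluous, since a convex function with an interior minimum at $0$ is already nondecreasing on $[0,\alpha)$).
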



\begin{proof}
i) Let $(\theta_n)_{n\in\N}\subset S_{\al}$ be such that $\varphi_n:=\im\,\theta_n\geq \im\,\theta$, $n\in\N$, and $$\|V(\e^{\I\,\im\theta_n}\cdot)\|_1 \longrightarrow  v_{\theta},\quad n\to\infty.$$ Then there exists $N\in\N$ such that $\|V(\e^{\I\,\im\theta_n}\cdot)\|_1<1$ for all $n\geq N$.
Since
\[
\e^{\I\,\varphi_n}H(\I\,\varphi_n)=-\I\frac{\rd}{\rd x}\sigma_1+m\e^{\I\,\varphi_n}\sigma_3+\e^{\I\,\varphi_n}V(\e^{\I\,\varphi_n}\cdot)
\]
and $|\e^{\I\,\varphi_n}|=1$, Theorem \ref{thm. 1} and Proposition \ref{prop. resonances} iii) imply that for all $n\geq N$,
\beq\label{spectral inclusion for Htheta 2}
\sigma_{\rm d}(\e^{\I\,\varphi_n}H(\theta_n)\subset K_{mr_{\theta_n}}(m\e^{\I\,\varphi_n}x_{\theta_n})\,\dotcup \,K_{mr_{\theta_n}}(-m\e^{\I\,\varphi_n}x_{\theta_n}).
\eeq
In fact, we have to modify the proof of Theorem \ref{thm. 1} slightly to take the complex mass term $m':=m\e^{\I\,\theta}$ into account.
It is easy to see, however, that we only have to replace $m'r_{\theta}$ by $|m'|r_{\theta}$.
By Proposition \ref{prop. resonances} vii) and \eqref{spectral inclusion for Htheta 2}, it follows that for all $n\geq N$
\[
\mathcal{R}(H)\cap D_{\theta}\subset K_{mr_{\theta_n}}(mx_{\theta_n})\,\dotcup \,K_{mr_{\theta_n}}(-mx_{\theta_n}).
\]
Letting $n\to\infty$ proves \eqref{eq. resonance inclusion}.

ii) Without loss of generality, assume that $V\geq 0$. We show that $\|V(\e^{\I\,\varphi}\cdot)\|_1$ achieves a global minimum at $\varphi=0$. It then follows from Proposition \ref{prop. resonances} viii) that $\|V(\e^{\I\,\varphi}\cdot)\|_1$ must be monotonically increasing in $|\varphi|$, and hence $v_{\theta}=\|V(\e^{\I\im\,\theta}\cdot)\|_1$.

Let $\varphi\in(0,\al)$, $R>0$, and define the curves
\begin{align*}
\gm_1&:=\set{x\e^{\I\,\varphi}}{-R\leq x\leq R},\\
\gm_2&:=\set{R\e^{\I\,t}}{\varphi\geq t\geq 0},\\
\gm_3&:=\set{R\e^{\I\,t}}{\pi\leq t\leq \pi+\varphi}.
\end{align*}
By Cauchy's theorem,
\beq\label{eq. Cauchy}
\int_{-R}^R V(x)\,\rd x=\sum_{j=1}^3\int_{\gm_j} V(z)\, \rd z.
\eeq
We show that the contribution of the integrals over $\gm_2$ and $\gm_3$ vanishes in the limit $R\to\infty$. Indeed, let $z=R\e^{\I\,t}$, $0\leq t\leq \varphi$, be a parametrization of $\gm_2$, so that
\beq\label{eq. gamma2 contour}
\int_{\gm_2} V(z)\, \rd z=\I\,R\,\int_{0}^{\varphi} V(R\e^{\I\,t})\,\e^{\I\,t}\, \rd t.
\eeq
By Fubini's theorem and assumption iii) of Hypothesis \rref{hypothesis V resonance},
\begin{align*}
\int_{-\infty}^{\infty}\left|\int_{0}^{\varphi} V(R\e^{\I\,t})\e^{\I\,t}\, \rd t\right|\,\rd R
&\leq\int_{-\infty}^{\infty}\int_{0}^{\varphi} |V(R\e^{\I\,t})|\, \rd t\,\rd R\\ 
&= \int_{0}^{\varphi}\int_{-\infty}^{\infty} |V(R\e^{\I\,t})| \,\rd R \, \rd t\\
&=\int_{0}^{\varphi}\|V(\e^{\I\,t}\cdot)\|_1 \, \rd t\leq \varphi\cdot\sup_{|t|\leq\varphi}{\|V(\e^{\I\,t}\cdot)\|_1}.
\end{align*}
It follows that the function 
\[
R\mapsto \int_{0}^{\varphi} V(R\e^{\I\,t})\e^{\I\,t}\, \rd t
\]
belongs to $L^1(\R)$ and is thus ${\rm o}(1/R)$. Hence, by \eqref{eq. gamma2 contour}, the integral over $\gm_2$ tends to zero as $R\to\infty$. 
The proof for $\gm_3$ is analogous. 

It now follows from \eqref{eq. Cauchy} that, in the limit $R\to\infty$,
\[
\int_{\R} V(x)\,\rd x=\int_{\gm_1} V(z)\, \rd z=\e^{\I\,\varphi}\int_{\R}V(\e^{\I\,\varphi}x)\,\rd x.
\]
Taking the absolute value on both sides proves that $\|V\|_1\leq \|V(\e^{\I\,\varphi}\cdot)\|_1$ for all $\varphi\in(0,\al)$. The proof for $\varphi\in[-\al,0)$ is analogous.

iii) By the proof of Proposition \ref{prop. resonances} viii), $\|V(\e^{\I\,\varphi}\cdot)\|_1$ is continuous, so that
\[
\lim_{\varphi\searrow 0}\|V(\e^{\I\,\varphi}\cdot)\|_1=\|V\|_1.
\]
Let $(\theta_n)_{n\in\N}\subset S_{\al}$ be such that $\varphi_n:=\im\,\theta_n\to 0$ and $\|V(\e^{\I\,\varphi_n}\cdot)\|_1\to \|V\|_1$, $n\to\infty$. Moreover, let $N\in\N$ be such that $\|V(\e^{\I\,\varphi}\cdot)\|_1<1$, $n\geq N$.
If $\lm\in\R\setminus\{\pm m\}$ is an eigenvalue of $H$, then by Proposition \ref{prop. resonances} vi), $\lm\in\sigma(H(\theta_n))$ for all $n\geq N$. The inclusion \eqref{eq. embedded eigenvalues} now follows from \eqref{spectral inclusion for Htheta 2} if we take $n\to\infty$. 

iv) is immediate from i) since then $mr_{\theta}=0$ (recall that we use the convention $K_0(z_0)=\emptyset$).
\end{proof}

\begin{remark}
The resonance enclosure \eqref{eq. resonance inclusion} in Theorem \ref{thm. resonances} may be used for every~$\theta$, with $v_{\theta}<1$. However, increasing $\im\,\theta$ in order to enlarge the set $D_{\theta}$ revealing the resonances increases the size of the resonance-enclosing disks $K_{mr_{\theta}(\pm mx_{\theta})}$. For every~$\theta$, the disks $K_{mr_{\theta}(\pm mx_{\theta})}$
intersect the boundary $\sigma_{\rm e}(H(\theta))$ of $D_{\theta}$ in only one point each. The set of intersection points consists of two curves parametrized by $\im\,\theta$, and all resonances in $D_{\al}$ in the lower (upper) half plane lie below (above) these curves.
The shape of the resonance-enclosing set corresponding to Example~\ref{ex. resonances} is illustrated in Figure \ref{fig. resonance plot}.
\end{remark}

\begin{example}\label{ex. resonances}
Consider the resonances and embedded eigenvalues for the potential
\[
V(x)=a\,\e^{-b \,x^2}\,I_{\C^2}
\]
with $a\in\R$, $b>0$. Clearly, $V$ has an analytic continuation to an entire function, bounded on $\overline{\Sigma_{\pi/4}}$. Moreover, for $|\varphi|<\pi/4$, the function $V(\e^{\I\,\varphi}\cdot)$ is in $L^1(\R)$ with norm
\[
\|V(\e^{\I\,\varphi}\cdot)\|_1=\frac{|a|\sqrt{\pi}}{\sqrt{b\,\cos(2\varphi)}},
\]
hence it is uniformly bounded for $|\varphi|\leq\be<\pi/4$. Since $V(x)\geq 0$, $x\in\R$, by Theorem \ref{thm. resonances} ii), $v_{\theta}=\|V(\e^{\I\,\im\,\theta}\cdot)\|_1$. Hence, if $|a|\sqrt{\pi}/\sqrt{b}<1,$ then $v_{\theta}<1$ for all $\theta\in[0,\pi/4)$ with
\[
\im\,\theta<\frac{1}{2}\arccos\left(\frac{|a|^2\pi}{b}\right).
\]
Therefore, for these $\theta$, Theorem \ref{thm. resonances} i) and iii) apply; for example, the resonances in $D_{\pi/6}$ lie in the union of the two disks
$K_{mr_{\pi/6}}(\pm m x_{\pi/6})$ with
\[
x_{\pi/6}=\frac{b-a^2\pi}{\sqrt{b(b-2 a^2\pi)}},\quad r_{\pi/6}=\frac{a^2\pi}{\sqrt{b(b-2 a^2\pi)}},
\]
the eigenvalues of $H$ (including the embedded ones) lie in the two intervals
\[
\left(-m\Big(1-\frac{a^2\pi}{b}\Big)^{-1/2}\!,\,-m\Big(1-\frac{a^2\pi}{b}\Big)^{1/2}\right)\dotcup\left(m\Big(1-\frac{a^2\pi}{b}\Big)^{1/2}\!,\,m\Big(1-\frac{a^2\pi}{b}\Big)^{-1/2}\right).
\]
Figure \ref{fig. resonance plot} shows the region of resonance enclosure in the lower half plane; the picture in the upper half plane is just the mirror image.
\end{example}

\begin{figure}[h]\label{fig. resonance plot}
\begin{center}
\includegraphics[scale=0.4]{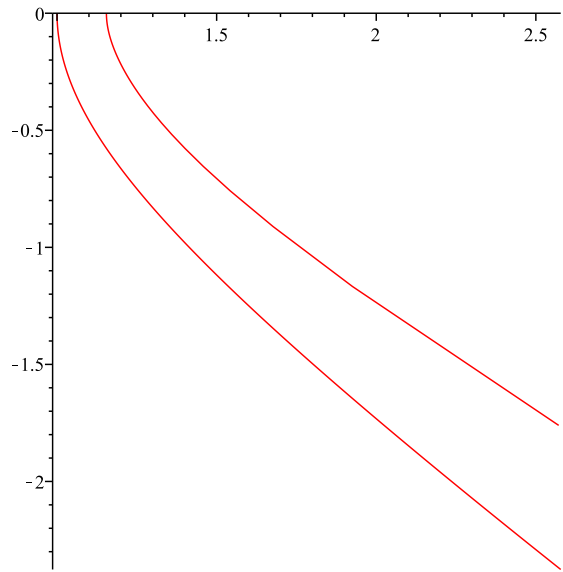}
\end{center}
\caption{The resonances of Example \ref{ex. resonances} in the lower half plane are are situated within the area between the two red curves.}
\end{figure}


\section{Construction of $H$ for potentials in $L^1(\R)+L^{\infty}_0(\R)$}\label{section Construction of the perturbed operator}

In Sections \ref{section Integrable potentials}--\ref{section Embedded eigenvalues and resonances} we assumed in all proofs that $V$ is bounded, so that we could conveniently define the sum of $H_0$ and $V$. 
In this final section we show how to construct a closed extension $H$ of $H_0+V$ for $V\in L^1(\R)+L^{\infty}_0(\R)$.

One might first try to approximate $V\in L^1(\R)+L^{\infty}_0(\R)$ by bounded potentials $V_n$, and then show that the operators $H_n=H_0+V_n$ converge in the norm-resolvent topology to some operator $H$. If $V$ were Hermitian-valued (and thus $H_n$, $H$ selfadjoint), we could conclude that the eigenvalue estimates also hold for the limit operator $H$. However, for non-Hermitian potentials, this need not be true since the spectrum is not lower-semicontinuous on the metric space of closed operators, see~\cite[IV.3.2]{Ka}.

Therefore, we need a more direct access to the perturbed operator $H$. If we define it via its resolvent by equation \eqref{eq. resolvent formula bounded V}, then it will turn out to be a closed extension of $H_0+V$. The precise statement is given in the subsequent abstract theorem, which includes the general version of the Birman-Schwinger principle.
We note that this construction is more general than a quadratic form approach or even an operator perturbation approach, see \cite[Remark 2.4 iii)]{GLMZ05}.

\begin{theorem}\label{abstract theorem}
Let $\H$ and $\K$ be Hilbert spaces, and let $H_0:\H\to\H$, $A:\H\to\K$ and $B:\K\to\H$ be closed densely defined operators. Suppose that
$\rho(H_0)\neq \emptyset$ and that the following hold:
\begin{itemize}
\item[\rm a)] $AR_0(z)\in\LL(\H,\K)$ and $R_0(z)B$ has bounded closure.
\item[\rm b)] For some {\rm(}and hence for all {\rm)} $z\in\rho(H_0)$, the operator $AR_0(z)B$ has bounded closure
\[
Q(z):=\overline{AR_0(z)B}\in\LL(\K). 
\]

\item[\rm c)] $-1\in\rho(Q(z_0))$ for some $z_0\in\rho(H_0)$.
\end{itemize}
Then there exists a closed densely defined extension $H$ of $H_0+BA$ whose resolvent~$R(z)=(H-z)^{-1}$, $z\in\rho(H)$, is given by 
\beq\label{resolvent factorized perturbation}
R(z)=R_0(z)-\overline{R_0(z)B}\left(I_{\K}+Q(z)\right)^{-1}\!AR_0(z)\in\LL(\H),\quad z\in\rho(H_0)\cap\rho(H),
\eeq 
with
\[
\rho(H)\cap\rho(H_0)=\set{z\in\rho(H_0)}{-1\in\rho(Q(z))}.
\]
Moreover, for $z\in\rho(H_0)$, the subspaces $\ker(H-z)$ and $\ker(I+Q(z))$ are isomorphic.
\end{theorem}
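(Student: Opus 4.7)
The approach is to define $H$ via the candidate resolvent formula \eqref{resolvent factorized perturbation} and then verify each advertised property in turn. Set
$$\Omega := \{\, z \in \rho(H_0) : -1 \in \rho(Q(z)) \,\},$$
which is nonempty by hypothesis (c) and open because $z \mapsto Q(z)$ is norm-continuous on $\rho(H_0)$ (an easy consequence of the first resolvent identity for $H_0$ combined with (a), (b)). For $z \in \Omega$ put
$$\mathcal{R}(z) := R_0(z) - \overline{R_0(z)B}\bigl(I_{\K} + Q(z)\bigr)^{-1} A R_0(z) \in \LL(\H).$$

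First I would establish the pseudo-resolvent identity
$\mathcal{R}(z_1) - \mathcal{R}(z_2) = (z_1 - z_2)\,\mathcal{R}(z_1)\mathcal{R}(z_2)$
on $\Omega$ by direct algebraic expansion, exploiting the second resolvent identity for $R_0$ and the auxiliary formula $Q(z_1) - Q(z_2) = (z_1 - z_2)\,\overline{A\,R_0(z_1) R_0(z_2) B}$, which itself follows by taking closures via (a) and (b). Standard pseudo-resolvent theory (cf.\ \cite[Theorem VIII.1.1]{Ka}) then produces a closed linear relation $H$ on $\H$ with $(H-z)^{-1} = \mathcal{R}(z)$ for $z \in \Omega$; this relation is single-valued (an operator) iff $\ker \mathcal{R}(z) = \{0\}$, densely defined iff $\Ran \mathcal{R}(z)$ is dense, and both subspaces are $z$-independent on $\Omega$.

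Next I would verify injectivity and dense range of $\mathcal{R}(z)$, show that $H$ extends $H_0 + BA$, and establish the Birman--Schwinger isomorphism. For injectivity: if $\mathcal{R}(z)f = 0$ and we set $g := (I+Q(z))^{-1} A R_0(z) f$, then $R_0(z) f = \overline{R_0(z)B}\,g$; an approximation argument with truncations of $B$, combined with the relation $(H_0 - z)R_0(z) B \subset B$, forces $f = 0$. Density of $\Ran \mathcal{R}(z)$ follows by applying the same argument to the adjoint pseudo-resolvent $\mathcal{R}(\bar z)^*$. To show $H \supset H_0 + BA$, take $f \in \dom(H_0) \cap \dom(A)$ with $Af \in \dom(B)$ and compute $\mathcal{R}(z)\bigl((H_0 + BA - z)f\bigr) = f$ directly, using $AR_0(z)(H_0 - z)f = Af$ and the identity $I - (I+Q(z))^{-1}Q(z) = (I+Q(z))^{-1}$. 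The equality $\rho(H) \cap \rho(H_0) = \Omega$ then follows because $\mathcal{R}(z)^{-1}$ exists precisely when $(I+Q(z))^{-1}$ does, as can be seen by pre- and post-multiplying the resolvent formula by $A\,R_0(z)$ and $R_0(z)\,B$. For the kernel isomorphism, the map $\phi\colon \ker(I_{\K}+Q(z)) \to \ker(H-z)$, $\phi(g) := -\overline{R_0(z)B}\,g$, is injective since $A\,\overline{R_0(z) B}\, g = Q(z) g = -g$; surjectivity follows by producing, for each $f \in \ker(H-z)$, a preimage $g$ obtained as a limit of $A_n f$ along suitable truncations.

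The principal obstacle is the careful bookkeeping of closures: since $A$ and $B$ are a priori unbounded, products such as $A\,\overline{R_0(z)B}$ and $(H_0 - z)\,\overline{R_0(z)B}$ cannot be manipulated purely formally but must be justified via approximation by bounded operators and the boundedness hypotheses (a), (b). Once the domains are tracked with care, the remaining algebraic identities are routine.
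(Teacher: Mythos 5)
The paper offers no proof of Theorem~\ref{abstract theorem}; it simply refers the reader to \cite{GLMZ05} (and to \cite{Ka65,Ka83}). Your pseudo-resolvent strategy --- define $\mathcal R(z)$ by the advertised formula on $\Omega=\{z\in\rho(H_0): -1\in\rho(Q(z))\}$, verify the resolvent identity, invoke pseudo-resolvent theory to obtain a closed relation, then prove injectivity, dense range, the extension property and the Birman--Schwinger kernel isomorphism --- is precisely the route taken in those references, so the architecture of your argument is the intended one. The resolvent identity does close algebraically: the only non-obvious simplification is $(I_\K+Q(z_1))^{-1}\bigl(Q(z_1)-Q(z_2)\bigr)(I_\K+Q(z_2))^{-1}=(I_\K+Q(z_2))^{-1}-(I_\K+Q(z_1))^{-1}$, after which all cross terms cancel; the passage from $R_0(z)B$, $AR_0(z)B$ to their bounded closures is justified by density of $\dom(B)$.

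However, your injectivity argument does not work as written. From $\mathcal R(z)f=0$ you get $R_0(z)f=\overline{R_0(z)B}\,g$ with $g=(I_\K+Q(z))^{-1}AR_0(z)f$, and you propose to conclude via $(H_0-z)R_0(z)B\subset B$ and truncations of $B$. But although both sides of $R_0(z)f=\overline{R_0(z)B}\,g$ do lie in $\dom(H_0)$, applying $H_0-z$ only gives $f=(H_0-z)\overline{R_0(z)B}\,g$, which cannot be identified with $Bg$ unless one already knows $g\in\dom(B)$; and approximating $g$ by $g_n\in\dom(B)$ gives $R_0(z)Bg_n\to R_0(z)f$ in $\H$, which --- since $H_0-z$ is unbounded --- says nothing about convergence of $Bg_n$, so closedness of $B$ is of no help. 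The correct tool is the closedness of $A$. Since $AR_0(z)\in\LL(\H,\K)$ one has $R_0(z)f\in\dom(A)$; taking $g_n\in\dom(B)$ with $g_n\to g$, one gets $R_0(z)Bg_n\to\overline{R_0(z)B}\,g$ and $A\,R_0(z)Bg_n=Q(z)g_n\to Q(z)g$, whence by closedness of $A$ the vector $\overline{R_0(z)B}\,g$ lies in $\dom(A)$ with $A\,\overline{R_0(z)B}\,g=Q(z)g$. Applying $A$ to $R_0(z)f=\overline{R_0(z)B}\,g$ therefore yields $AR_0(z)f=Q(z)g$; comparing with $AR_0(z)f=(I_\K+Q(z))g$ gives $g=0$ and thus $f=0$. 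The same identity $A\,\overline{R_0(z)B}=Q(z)$ on all of $\K$ is also exactly what you need to justify the injectivity of the map $\phi(g)=-\overline{R_0(z)B}\,g$ in the kernel isomorphism, so it is worth isolating it as a lemma rather than appealing to a relation involving $B$.
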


\begin{proof}
The proof may be found e.g.\ in \cite{GLMZ05}, compare also \cite{Ka65, Ka83}.
\end{proof}

\begin{remark}
If $H_0+V$ has nonempty resolvent set, and is, hence, closed, then $H=H_0+V$. In particular, this is the case whenever $V$ is bounded, or more generally, $H_0$-bounded with relative bound less than one. For example, this holds if $V_{i,j}\in L^p(\R)$ for some $p\in[2,\infty]$, see e.g.\ \cite[Satz 17.7]{Weid2}. Note that the whole $L^p$-scale, $p\in[1,\infty]$, is contained in the class $L^1(\R)+L_0^{\infty}(\R)$ considered in Section~\ref{section Slowly decaying potentials}.
\end{remark}

Since the proofs of Sections \ref{section Integrable potentials}--\ref{section Embedded eigenvalues and resonances} only involve the resolvent $R_0(z)$, they admit straightforward generalizations to the case where $V$ is unbounded and $H$ is the operator given by Theorem \ref{abstract theorem}; one just has to replace $R_0(z)B$ and $AR_0(z)B$ by their bounded closures everywhere. Indeed, \eqref{eq. Qleqmv} and \eqref{eq. norm of AR0 and R0B} guarantee that the conditions a)--c) of Theorem~\ref{abstract theorem} are satisfied. What remains to be shown is that

\begin{enumerate}
\item the different factorizations of $V$ used in Section~\ref{section Slowly decaying potentials} lead to the same extension~$H$;
\item we still have $\sigma_{\rm e}(H)=\sigma_{\rm e}(H_0)$.
\end{enumerate}
To address (1) we introduce the following definition.

\begin{definition}\label{def. compatible}
Let $\H$, $\K$, $\K'$ be Hilbert spaces, and let $H_0:\H\to\H$, $A:\H\to\K$, $B:\K\to\H$, $A':\H\to\K'$, $B':\K'\to\H$ be such that $BA=B'A'$. Suppose that the triples $(H_0,A,B)$ and $(H_0,A',B')$ satisfy the assumptions of Theorem \rref{abstract theorem}. The two factorizations $V:=BA=B'A'$ are called \emph{compatible} if the following hold:
\begin{itemize}
\item[\rm i)] The operators $A'R_0(z)B$ and $AR_0(z)B'$ have bounded closure for one {\rm(}and hence for all{\rm)} $z\in\rho(H_0)$,
\[
F(z):=\overline{A'R_0(z)B}\in\LL(\K,\K'),\quad G(z):=\overline{AR_0(z)B'}\in\LL(\K',\K).
\]
\item[\rm ii)] There exist dense linear manifolds $\mathcal{C}\subset\H$, $\dom\subset\K$ and $\dom'\subset\K'$ such that for all $z\in\rho(H_0)$,
\begin{align*}
\mathcal{C}&\subset\set{f\in\H}{R_0(z)f\in\dom(V),\,R_0(z)VR_0(z)f\in\dom(V)},\\
\dom&\subset\set{f\in\dom(B)}{R_0(z)Bf\in\dom(V)},\\
\dom'&\subset\set{f\in\dom(B')}{R_0(z)B'f\in\dom(V)}. 
\end{align*}
\end{itemize}
\end{definition}

\begin{proposition}\label{corollary independend of factorization}
If $V=BA=B'A'$ are two compatible factorizations, then the corresponding extensions $H$ and $H'$ of $H_0+V$ in Theorem \rref{abstract theorem} coincide.
\end{proposition}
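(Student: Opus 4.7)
The plan is to show that, for some $z \in \rho(H_0) \cap \rho(H) \cap \rho(H')$, the bounded resolvents $R(z)$ and $R'(z)$ coincide; such a $z$ exists by assumption c) of Theorem~\ref{abstract theorem}, and once the identity $R(z) = R'(z)$ is established, $H = H'$ follows. Since both resolvents lie in $\LL(\H)$ and the manifold $\mathcal{C}$ of Definition~\ref{def. compatible} is dense in $\H$, it suffices to verify $Tf = 0$ for $f \in \mathcal{C}$, where $T := R(z) - R'(z)$.

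For $f \in \mathcal{C}$, the first clause of Definition~\ref{def. compatible}~ii) ensures $R_0(z) f \in \dom(V)$, hence $R_0(z)f \in \dom(H_0) \cap \dom(V) \subset \dom(H) \cap \dom(H')$, because both $H$ and $H'$ extend $H_0 + V$. Consequently $(H-z) R_0(z) f = (H'-z) R_0(z) f = f + V R_0(z) f$; applying $R(z)$ and $R'(z)$ to both sides and subtracting yields the basic identity
\[
Tf \;=\; -T\bigl(V R_0(z) f\bigr), \qquad f \in \mathcal{C}.
\]
The second clause of Definition~\ref{def. compatible}~ii) lets me repeat the argument with $V R_0(z) f$ in place of $f$, since $R_0(z)(V R_0(z) f) \in \dom(V)$, giving $Tf = T\bigl((V R_0(z))^2 f\bigr)$.

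To finish, the two factorizations must be dovetailed. Writing $(V R_0(z))^2 f = B\bigl(Q(z) A R_0(z) f\bigr) = B'\bigl(F(z) A R_0(z) f\bigr)$, and using the resolvent identity $R(z)B = \overline{R_0(z)B}(I + Q(z))^{-1}$ on $\dom(B)$ (and its primed counterpart, both derived from the same one-step argument applied to vectors in the range of $B$ respectively $B'$), one rewrites $T\bigl((V R_0(z))^2 f\bigr)$ in a form where the compatibility identity $B Q(z) h = B' F(z) h$ on $\dom$ (obtained by applying the two factorizations of $V$ to the element $R_0(z) B h \in \dom(V)$) forces the bracketed term to vanish. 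The density of $\dom$ in $\K$ and of $\dom'$ in $\K'$, guaranteed in Definition~\ref{def. compatible}~ii), justifies the passage from these algebraic identities to the operator level.

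The principal obstacle is this last step: without a smallness hypothesis on $V R_0(z)$ there is no Neumann-type series argument, so the cancellation must be extracted purely from the structural interplay between the factorizations $V = BA$ and $V = B'A'$ and the dense manifolds $\mathcal{C}$, $\dom$, $\dom'$ encoded in Definition~\ref{def. compatible}. Once that cancellation is in place, $T$ vanishes on $\mathcal{C}$ and hence on all of $\H$, so $R(z) = R'(z)$, which completes the identification $H = H'$.
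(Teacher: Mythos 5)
Your proposal has a sound beginning but two genuine gaps where the paper's actual work is done.

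First, you assume at the outset that there exists $z \in \rho(H_0) \cap \rho(H) \cap \rho(H')$. That is not given. Assumption c) of Theorem~\ref{abstract theorem}, applied separately to the triples $(H_0,A,B)$ and $(H_0,A',B')$, produces points $z_0$ with $-1\in\rho(Q(z_0))$ and $z_0'$ with $-1\in\rho(Q'(z_0'))$, but nothing forces $z_0 = z_0'$ or even that the two resolvent sets $\rho(H)\cap\rho(H_0)$ and $\rho(H')\cap\rho(H_0)$ intersect. The paper proves precisely that $\rho(H)\cap\rho(H_0)=\rho(H')\cap\rho(H_0)$ before doing anything else, and this equality is itself a nontrivial consequence of the compatibility structure.

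Second, and more seriously, your closing ``dovetailing'' step is asserted rather than carried out, and the identity you single out is not the one that does the work. The relation $BQ(z)h = B'F(z)h$ on $\dom$ merely restates $BA = B'A'$ applied to $R_0(z)Bh$; it is trivially true and by itself yields no cancellation, since $(VR_0(z))^2 f$ is simply the same vector written two ways. What is actually needed is the \emph{intertwining} of the closures: from $A'R_0(z)B \cdot AR_0(z)B = A'R_0(z)B' \cdot A'R_0(z)B$ and $AR_0(z)B' \cdot A'R_0(z)B' = AR_0(z)B \cdot AR_0(z)B'$ on $\dom$, $\dom'$ (here compatibility condition i) guarantees that $F(z)$, $G(z)$ are bounded and condition ii) supplies the dense sets), one obtains $F(z)Q(z)=Q'(z)F(z)$ and $G(z)Q'(z)=Q(z)G(z)$, hence $F(z)(I_{\K}+Q(z))^{-1}=(I_{\K'}+Q'(z))^{-1}F(z)$ and the companion formula with $G(z)$. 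These commutation relations are what let you move $(I_{\K}+Q(z))^{-1}$ past $F(z)$ (or expand it via $(I_{\K}+Q(z))^{-1}=(I_{\K}-Q(z))+G(z)(I_{\K'}+Q'(z))^{-1}F(z)$) and thereby transform $\overline{R_0(z)B}(I_{\K}+Q(z))^{-1}AR_0(z)$ into $\overline{R_0(z)B'}(I_{\K'}+Q'(z))^{-1}A'R_0(z)$ on $\mathcal{C}$. They also simultaneously settle the first gap, since they show $-1\in\rho(Q(z))$ iff $-1\in\rho(Q'(z))$. Your iterated identity $Tf = T\bigl((VR_0(z))^2 f\bigr)$ can in principle be pushed through to the same conclusion, but only after establishing these intertwining relations, which your sketch neither states nor derives; as written, the ``bracketed term'' you want to vanish is left unidentified, and the proof does not close.
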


\begin{proof}
By the first resolvent identity for $H_0$, for $z_1,z_2\in\rho(H_0)$,
\begin{align*}
A'R_0(z_1)B-A'R_0(z_2)B&=(z_2-z_1)\,A'R_0(z_2)R_0(z_1)B.
\end{align*}
Since the right hand side has bounded (everywhere defined) closure by assumption~i), it follows that
$A'R_0(z_1)B$ has bounded closure if and only if $A'R_0(z_2)B$ does.
Denote
\[
Q(z):=\overline{AR_0(z)B},\quad Q'(z):=\overline{A'R_0(z)B'},\quad z\in\rho(H_0).
\]  
For $f\in\dom$, $g\in\dom'$, $z\in\rho(H_0)$, we then have the identities
\begin{align*}
F(z)Q(z)f&=A'R_0(z)B\,AR_0(z)Bf=A'R_0(z)B'\,A'R_0(z)Bf=Q'(z)F(z)f,\\
G(z)Q'(z)g&=AR_0(z)B'\,A'R_0(z)B'g=AR_0(z)B\,AR_0(z)B'g=Q(z)G(z)g,
\end{align*}
which extend to all $f\in\K$, $g\in\K'$ by continuity, due to ii). In particular, for all $z\in\rho(H)$,
\begin{align*}
F(z)(I_{\K}\pm Q(z))&=(I_{\K'}\pm Q'(z))F(z),\\
G(z)(I_{\K'}\pm Q'(z))&=(I_{\K}\pm Q(z))G(z).
\end{align*}
Using the identities above, one can check that if $-1\in\rho(Q(z))$, then $-1\in\rho(Q'(z))$ and vice versa, and 
\begin{align}
(I_{\K'}+Q'(z))^{-1}&=(I_{\K'}-Q'(z))+F(z)(I_{\K}+Q(z))^{-1}G(z),\label{inverseQprime}\\
(I_{\K}+Q(z))^{-1}&=(I_{\K}-Q(z))+G(z)(I_{\K'}+Q'(z))^{-1}F(z)\label{inverseQ}.
\end{align}
This proves that 
\[
\rho(H)\cap\rho(H_0)=\rho(H')\cap\rho(H_0)\neq \emptyset.
\]
Using formula \eqref{inverseQprime} and the equality $BA=B'A'$, we infer that on the linear manifold $\mathcal{C}\subset\H$, for all $z\in\rho(H_0)\cap\rho(H)$,
\begin{align*}
&R_0(z)B\left(I_{\K}+Q(z)\right)^{-1}AR_0(z)\\
&=R_0(z)VR_0(z)-R_0(z)VR_0(z)VR_0(z)\\
&\quad +R_0(z)VR_0(z)B'\left(I_{\K'}+Q'(z)\right)^{-1}A'R_0(z)VR_0(z)\\
&=R_0(z)B'(I_{\K'}-Q'(z)+Q'(z)\left(I_{\K'}+Q'(z)\right)^{-1}Q'(z))A'R_0(z)\\
&=R_0(z)B'\left(I_{\K'}+Q'(z)\right)^{-1}A'R_0(z).
\end{align*}
Since $\mathcal{C}$ is dense in $\H$, this identity extends to all of $\H$ by continuity if we replace $R_0(z)B$ and $R_0(z)B'$ by their (bounded) closures, and hence formula \eqref{resolvent factorized perturbation} for the resolvents of $H$ and $H'$ shows that 
\[
(H-z)^{-1}=(H'-z)^{-1},\quad z\in \rho(H)\cap\rho(H_0)=\rho(H')\cap\rho(H_0).\qedhere
\]
\end{proof}

\begin{proposition}\label{proposition construction for slowly decaying}
Let $H_0$ be the free Dirac operator \eqref{eq. Dirac op.} on $\H=L^2(\R)\otimes\C^2$, and let $V=(V_{ij})_{i,j=1}^2$ with $V_{ij}\in L^1(\R)+ L^{\infty}_0(\R)$ for $i,j=1,2$. 
For any decomposition 
\beq\label{eq. decomposition V}
V=W+X,\quad W_{ij}\in L^1(\R),\quad X_{ij}\in L^{\infty}_0(\R),
\eeq
define $A$, $B$ as in \eqref{eq. factorization 2} on their natural domain. 
Then all decompositions of the form \eqref{eq. decomposition V} give rise to compatible factorizations $V=BA$. Moreover, these factorizations are also compatible with the one in \eqref{eq. standard factorization}.
\end{proposition}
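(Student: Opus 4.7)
The plan is to verify conditions~(i) and~(ii) of Definition~\ref{def. compatible} for each pair of factorizations in question: (a) any two factorizations of the form \eqref{eq. factorization 2} arising from distinct decompositions $V = W + X = W' + X'$ as in \eqref{eq. decomposition V}, and (b) any such two-part factorization paired with the standard factorization \eqref{eq. standard factorization}.

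For Condition~(i), I would show that the cross operators $A'R_0(z)B$ and $AR_0(z)B'$ have bounded closure by a $2\times 2$ block-matrix argument based on \eqref{eq. factorization 2}. Each block involves either two $L^1$-factors $W,W'$ or at least one $L^\infty_0$-factor. In the first case, the same Cauchy--Schwarz computation as in \eqref{eq. Cauchy-Schwarz AB} applied to the resolvent kernel yields
\[
\|A_W R_0(z) B_{W'}\| \leq \eta(|\Phi(z)|)\,\|W\|_1^{1/2}\,\|W'\|_1^{1/2}.
\]
In the second case, the $L^\infty_0$-part is a bounded multiplication operator, and the remaining factor $A_W R_0(z)$ or $R_0(z) B_{W'}$ is Hilbert--Schmidt by \eqref{eq. norm of AR0 and R0B}. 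Compatibility with the standard factorization \eqref{eq. standard factorization} is obtained the same way, pairing $|V|^{1/2}$ with the block factors through analogous block-wise Cauchy--Schwarz estimates once $V$ is decomposed as $W + X$.

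For Condition~(ii), dense manifolds $\mathcal{C}\subset\H$ and $\dom,\dom'\subset\K$ satisfying the stated domain inclusions must be produced. I would construct $\mathcal{C}$ by combining an $H_0$-smoothing via $R_0(z_0)$ (with $z_0 \in \rho(H_0)\cap\rho(H)$ fixed) with cutoffs on level sets $E_N := \{|V|\leq N\}\cap[-N,N]$, for instance as the union over $N\in\N$ of the subspaces generated by functions of the form $R_0(z_0)^2\chi_{E_N}h$ for $h\in L^2(\R)\otimes\C^2$. Density of $\mathcal{C}$ in $\H$ follows from the denseness of $\bigcup_N \chi_{E_N}L^2$ in $\H$ and the fact that $R_0(z_0)^2$ has dense range. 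Taking $\dom:=A\mathcal{C}$ and $\dom':=A'\mathcal{C}$, the remaining two inclusions in Definition~\ref{def. compatible} follow once the required inclusion for $\mathcal{C}$ is established. The latter amounts to checking that $VR_0(z)f$ and $VR_0(z)VR_0(z)f$ lie in $L^2$ for $f\in\mathcal{C}$, which one does by bounding these products via the estimates from Condition~(i) and the exponentially-decaying kernel of $R_0(z)$.

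The main obstacle is Condition~(ii): a general $V\in L^1(\R)+L^\infty_0(\R)$ need not be locally square-integrable, so a naive choice of $\mathcal{C}$ (e.g.\ Schwarz functions) will \emph{not} satisfy $R_0(z)f\in\dom(V)$, because the smooth, exponentially-decaying function $R_0(z)f$ can still be multiplied by a non-$L^2_{\mathrm{loc}}$ factor. The cutoff-by-level-set construction restricts attention to the bounded regime of $V$, and the additional smoothing from $R_0(z_0)$ is chosen to compensate for the smearing of supports by the free resolvent. Once Condition~(ii) is verified, Proposition~\ref{corollary independend of factorization} immediately implies that all such extensions $H$ coincide, completing the proof.
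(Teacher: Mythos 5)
Your handling of Condition~(i) parallels the paper's. Your treatment of Condition~(ii), however, has concrete gaps. Definition~\ref{def. compatible} requires $\dom\subset\K=\H\oplus\H$ and $\dom'\subset\K'$ to be \emph{dense} linear manifolds, but your choice $\dom:=A\mathcal{C}$ is contained in $\Ran A=\{(A_Wf,A_Xf):f\in\dom(A)\}$, a proper, non-dense ``diagonal'' subspace of $\H\oplus\H$; the same objection applies to $\dom':=A'\mathcal{C}$. Moreover, the inclusion required of $\dom$ does not follow from the one you propose to verify for $\mathcal{C}$: for $f\in\mathcal{C}$, one would need $R_0(z)BAf=R_0(z)Vf\in\dom(V)$, which is neither of the two conditions $R_0(z)f\in\dom(V)$ and $R_0(z)VR_0(z)f\in\dom(V)$ appearing in the $\mathcal{C}$ inclusion. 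The paper avoids both issues by taking step functions directly, $\mathcal{C}:=\Xi(\R)\otimes\C^2$ and $\dom=\dom':=\Xi(\R)\otimes\C^4$, which are manifestly dense in $\H$ and $\K$, respectively.

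Your heuristic objection to such ``naive'' choices --- that $R_0(z)f\notin\dom(V)$ since $V$ need not be locally $L^2$ --- points at a real subtlety, but the paper's argument does not rest on pointwise multiplicability: for $f\in\dom$ a step vector and $g:=R_0(z)Bf$, it verifies $g\in\dom(W)=\dom(W^{**})$ by bounding $|(W^*h,g)|\le C\|h\|$ uniformly over $h\in\dom(W^*)$, using the $|W|^{1/2}$ factor built into $B$ together with the exponential decay of the resolvent kernel; the remaining inclusions are treated analogously. Your cutoff-then-smooth construction does not in fact neutralize the difficulty you identified: applying $R_0(z_0)^2$ \emph{after} the cutoff $\chi_{E_N}$ undoes the support restriction, so $R_0(z)f$ for $f\in\mathcal{C}$ still takes nonzero values on the singular set of $V$, and the membership $R_0(z)f\in\dom(V)$ is no more evident than it would be for a step function.
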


\begin{proof}
We only prove the first claim. The proof of the second one is analogous.
Let $W,W'\in (L^1(\R))^4$ and $X,X'\in (L^{\infty}_0(\R))^4$ be such that $$V=W+X=W'+X'.$$ It is easy to see  
that $A^{\sharp}R_0(z)$, $\overline{R_0(z)B^{\sharp}}$ and $\overline{A^{\sharp}R_0(z)B^{\sharp}}$ are all bounded; here, $A^{\sharp}$ stands for $A$ or $A'$, and $B^{\sharp}$ stands for $B$ or $B'$. This shows that the condition~i) of Definition \ref{def. compatible} is satisfied.

In order to check condition ii) of Definition \ref{def. compatible}, let $\Xi (\R)\subset L^2(\R)$ denote the linear submanifold of step functions $f:\R\to\C$. We set
\[
\mathcal{C}:=\Xi (\R)\otimes\C^2,\quad \mathcal{D}:=\Xi (\R)\otimes\C^4,\quad \mathcal{D}':=\Xi (\R)\otimes\C^4. 
\]
Clearly, $\mathcal{C}\subset\H$, $\mathcal{D}\subset\K$, $\mathcal{D}'\subset\K$ are dense. Here, we only show that 
\beq\label{eq. domain inclusion D}
\dom\subset\set{f\in\dom(B)}{R_0(z)Bf\in\dom(V)},\quad z\in\rho(H_0);
\eeq
the proofs of the other two inclusions in Definition \ref{def. compatible} ii) are similar.
Note that, since $X$ is bounded, we have
\[
\dom(B)=\dom(B_W)\oplus\H,\quad \dom(V)=\dom(W).
\]
Let $f:= \chi_{[a,b]}\otimes (\al,\be)^t$ for some $a<b$ and $\al,\be\in\C^2$. Then $f=f_1+f_2$ with $f_1=\chi_{[a,b]}\otimes (\al,0)^t$, $f_2=\chi_{[a,b]}\otimes (0,\be)^t$ and for any $\eps>0$
\[
\int_{\R} \|B(x)f_1(x)\|_{\C^2}^2 \rd x\leq |\al|^2\,\int_{a}^b \|V(x)\| \rd x\leq  |\al|^2\,(C_{\eps}+(b-a)\,\eps),
\]
whence $f\in\dom(B)$. Now let $z\in\rho(H_0)$ and set $g:=R_0(z)Bf$. Then
\begin{align*}
\|g(x)\|_{\C^2}\leq \eta\,|\al| \int_a^b \e^{-\im\,k(z)\,|x-y|}\,\|W(y)\|^{1/2}\rd y
+\eta\,|\be|\,\|X\| \int_a^b \e^{-\im\,k(z)\,|x-y|}\,\rd y
\end{align*}
where we abbreviated $\eta(|\Phi(z)|)$ by $\eta$.
For $h\in\dom(W^*)$, we have
\begin{align*}
|(W^*h,g)|\leq \int_{\R} \|W(x)\| \, \|h(x)\|_{\C^2} \, \|g(x)\|_{\C^2} \rd x
\leq\eta\,|\al|\, I_1(h)+\eta|\be|\,\|X\|\, I_2(h)
\end{align*}
where
\begin{align*}
I_1(h)&= \int_{\R}\int_a^b \|W(x)\| \, \|h(x)\|_{\C^2} \,\e^{-\im\,k(z)\,|x-y|}\,\|W(y)\|^{1/2}\rd y\rd x\\
&\leq \eta \,\|h\|\,\int_a^b \left(\int_{\R}\|W(x)\|^2 \,\e^{-2\,\im\,k(z)\,|x-y|} \rd x\right)^{1/2}\,\|W(y)\|^{1/2}\rd y\\
&\leq \eta\,\|h\|\, \left(\sup_{a\leq y\leq b}\int_{\R}\|W(x)\|^2 \,\e^{-2\,\im\,k(z)\,|x-y|} \rd x\right)^{1/2}\,\int_a^b \|W(y)\|^{1/2}\rd y\\
&\leq \eta\,\|h\|\,\left(\sup_{a\leq y\leq b}\int_{\R}\|W(x)\|^2 \,\e^{-2\,\im\,k(z)\,|x-y|} \rd x\right)^{1/2}\,(b-a)\,\int_a^b \|W(y)\|\rd y,
\end{align*}
and, similarly,
\begin{align*}
I_2(h)&= \int_{\R}\int_a^b \|W(x)\| \, \|h(x)\|_{\C^2} \,\e^{-\im\,k(z)\,|x-y|}\,\rd y\rd x\\
&\leq \eta\,\|h\|\,(b-a)\, \left(\sup_{a\leq y\leq b}\int_{\R}\|W(x)\|^2 \,\e^{-2\,\im\,k(z)\,|x-y|} \rd x\right)^{1/2}.
\end{align*}
The supremum in the above two estimates is finite; indeed,
repeated application of Young's inequality yields
\[
\sup_{a\leq y\leq b}\int_{\R}\|W(x)\|^2 \,\e^{-2\,\im\,k(z)\,|x-y|} \rd x\leq \|W\|_1^4\, \|\e^{-2\,\im\,k(z)\,|\cdot|}\|_{6/7}. 
\]
This shows that $g\in\dom(W^{**})=\dom(W)$.
The claim now follows from Proposition~\ref{corollary independend of factorization}.
\end{proof}

In order to prove the invariance of the essential spectrum under perturbations $V\in L^1(\R)+L_0^{\infty}(\R)$, we use that the norm of the $L_0^{\infty}(\R)$ part can be made arbitrarily small. By the same arguments as explained in the introduction, this implies that $\sigma(H)\setminus\sigma_{\rm e}(H)=\sigma_{\rm d}(H)$ also holds for unbounded $V$.

\begin{proposition}
Let $H_0$ be the free Dirac operator \eqref{eq. Dirac op.} on $\H=L^2(\R)\otimes\C^2$, and let $V=(V_{ij})_{i,j=1}^2$ with $V_{ij}\in L^1(\R)+ L^{\infty}_0(\R)$ for $i,j=1,2$.
Then 
\[
\sigma_{\rm e}(H)=\sigma_{\rm e}(H_0)=(-\infty,-m]\cup [m,\infty).
\]
\end{proposition}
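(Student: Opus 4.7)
The plan is to prove that for some $z \in \rho(H) \cap \rho(H_0)$ the resolvent difference $R(z) - R_0(z)$ is compact on $\H$; once this is established, the characterization of $\sigma_{\rm e}$ via Fredholm operators together with \cite[Theorem IX.2.4]{EE} (in the non-selfadjoint formulation already used in the introduction) yields the invariance $\sigma_{\rm e}(H) = \sigma_{\rm e}(H_0)$, and the right-hand side equals $(-\infty,-m]\cup[m,\infty)$ by the explicit spectrum of the free Dirac operator. A common resolvent point exists because the quantity $\omega(z)$ in \eqref{eq. omega(z)} stays strictly below $1$ once $|\im\,z|$ is large enough, via the estimates already used in Section~\ref{section Slowly decaying potentials}.

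The central identity is the resolvent formula \eqref{resolvent factorized perturbation} of Theorem~\ref{abstract theorem} applied to the block factorization \eqref{eq. factorization 2}, namely
\[
R(z) - R_0(z) = -\overline{R_0(z)B}\,(I_\K + Q(z))^{-1}\, A R_0(z),
\]
where the decomposition $V = W + X$ satisfies $W_{ij} \in L^1(\R)$ and $X_{ij} \in L^\infty_0(\R)$. Since the middle factor $(I_\K + Q(z))^{-1}$ is bounded, it suffices to show that the outer factors $A R_0(z): \H \to \K$ and $\overline{R_0(z) B}: \K \to \H$ are each compact, which in turn reduces to compactness of the four block components $A_W R_0(z)$, $A_X R_0(z)$, $\overline{R_0(z) B_W}$, $\overline{R_0(z) B_X}$.

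For the $W$-components, $A_W R_0(z)$ and $\overline{R_0(z) B_W}$ are Hilbert--Schmidt by \eqref{eq. norm of AR0 and R0B}, hence compact. The $X$-components $A_X = |X|^{1/2}$ and $B_X = U_X|X|^{1/2}$ are only bounded multiplication operators and admit no Hilbert--Schmidt estimate, so they require a different treatment. The key observation is that $|X|^{1/2}$ again belongs to $L^\infty_0(\R)$ and can therefore be approximated in $L^\infty$-norm by bounded compactly supported scalar functions $\varphi_n$. For each $n$, the operator $\varphi_n R_0(z)$ factors through the bounded map $R_0(z): \H \to H^1(\R)\otimes\C^2 = \dom(H_0)$ followed by multiplication by $\varphi_n$ from $H^1$ into $L^2$; the latter is compact because $\varphi_n u$ remains supported in a fixed compact set and one invokes the Rellich embedding $H^1 \hookrightarrow L^2$ on bounded intervals. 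Passing to the norm limit shows that $A_X R_0(z)$ is compact, and the analogous argument gives compactness of $\overline{R_0(z) B_X}$.

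Combining the four blocks, both outer factors are compact; multiplied by the bounded middle factor, the resolvent difference $R(z) - R_0(z)$ is compact, and the claim follows. The main technical obstacle is the handling of the $X$-part: no integrable decay of $X$ is available, so Hilbert--Schmidt bounds fail, and one must exploit the vanishing of $X$ at infinity together with the \emph{local} Rellich compactness of the Sobolev embedding to recover compactness by an $L^\infty$-approximation argument.
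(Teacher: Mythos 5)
Your proof is correct, but it takes a genuinely different route from the paper for the delicate $L^\infty_0$-part. You fix a single decomposition $V = W + X$ and prove that the outer factors $AR_0(z)$, $\overline{R_0(z)B}$ are compact by combining the Hilbert--Schmidt bound for the $W$-blocks with a Rellich-type argument for the $X$-blocks: since $\|X(x)\|^{1/2} \to 0$ at infinity, cut off $|X|^{1/2}$ (and $B_X$, which also lies in $L^\infty_0$ since $\|B_X(x)\| = \|X(x)\|^{1/2}$) to a compact set, use that $R_0(z)$ maps $L^2$ boundedly into $H^1(\R)\otimes\C^2$, invoke the compact embedding $H^1([-n,n]) \hookrightarrow L^2([-n,n])$, and pass to the operator-norm limit. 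The paper instead uses a \emph{sequence} of decompositions $V = W_n + X_n$ with $\|X_n\|_\infty \to 0$, the fact (Proposition \ref{proposition construction for slowly decaying}) that the resulting resolvent $R(z)$ is independent of $n$, and the algebraic identities \eqref{inverseQprime}--\eqref{inverseQ} to split $R(z)-R_0(z)$ as $S_n + T_n$ where $S_n$ is Hilbert--Schmidt and $\|T_n\| \to 0$; compactness then follows because $R(z)-R_0(z)$ is the norm limit of the compact $S_n$. Your argument is more self-contained (it needs no invariance-of-$H$-under-refactorization result and treats the $X$-part head-on via Rellich), while the paper's argument is slicker given the machinery it has already built, and avoids any local compactness of the Sobolev embedding by shrinking $X_n$ away. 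Two minor points worth tightening in your write-up: the cutoffs should be matrix-valued (e.g.\ $\chi_{[-n,n]}|X|^{1/2}$) rather than scalar, and for $\overline{R_0(z)B_X}$ it is cleanest to note that $B_X$ itself lies in $L^\infty_0$ and either pass to adjoints or observe that $R_0(z)\psi_n$ with $\psi_n$ bounded and compactly supported is compact because $(R_0(z)\psi_n)^* = \psi_n^* R_0(\bar z)$ falls under the same Rellich argument.
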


\begin{proof}
Suppose first that $V_{ij}\in L^1(\R)$, and let $V=BA$ with $A$ and $B$ given by \eqref{eq. standard factorization}. By \eqref{eq. norm of AR0 and R0B}, $AR_0(z)$ and $\overline{R_0(z)B}$ are Hilbert-Schmidt operators, which implies that the resolvent difference $R(z)-R_0(z)$ is compact (even trace class). The equality of the essential spectra of $H_0$ and $H$ thus follows from \cite[Theorem IX.2.4]{EE}. 

If $V_{ij}\in L^1(\R)+ L_0^{\infty}(\R)$, we choose sequences $(W_n)_{n\in\N}\subset (L^1(\R))^4$ and $(X_n)_{n\in\N}\subset (L^{\infty}_0(\R))^4$ such that $V=W_n+X_n$ for all $n\in\N$ and $\|X_n\|\to 0$, $n\to\infty$. Furthermore, let 
\[
A_n:=\begin{pmatrix}A_{W_n}\\A_{X_n}\end{pmatrix},\quad B_n:=\begin{pmatrix}B_{W_n}\!\!&\!\! B_{X_n}\end{pmatrix},\quad Q_n(z):=\overline{A_n R_0(z)B_n},
\]
where e.g.\ $A_{W_n}:=|W_n|^{1/2}$, $B_{W_n}:=U_{W_n}|W_n|^{1/2}$, and $U_{W_n}$ is the partial isometry in the polar decomposition of $W_n$. By Proposition \ref{proposition construction for slowly decaying} it follows that
\[
R(z)=R_0(z)-\overline{R_0(z)B_n}\left(I_{\K}+Q_n(z)\right)^{-1}A_n R_0(z)
\]
is independent of $n$. Using the relation \eqref{inverseQprime} or \eqref{inverseQ}, we obtain
\[
R(z)-R_0(z)=S_n+T_n
\]
where each summand of $S_n$ contains at least one factor of $A_{W_n} R_0(z)$, $\overline{R_0(z)B_{W_n}}$ or $\overline{A_{W_n}R_0(z)B_{W_n}}$, and each summand of $T_n$ contains only factors of $A_{X_n}$, $B_{X_n}$ or~$R_0(z)$. This means that $S_n$ is compact (even Hilbert-Schmidt), while $\|T_n\|\to 0$ as $n\to\infty$. Therefore, $R(z)-R_0(z)$ is the norm limit of compact operators and hence compact itself. 
\end{proof}


\noindent
{\bf Acknowledgements.} 
{\small The first author gratefully acknowledges the support of Schweizerischer Nationalfonds, SNF, through the postdoc stipend PBBEP2\_\_136596; the third author thanks for the support of SNF, grant no.\ 200021-119826/1, and of Deutsche Forschungsgemeinschaft, DFG, grant no.\ TR368/6-2. Both thank the Institut Mittag-Leffler for the support and kind hospitality within the RIP programme ({\it Research in Peace}), during which this manuscript was completed.}

\bibliographystyle{plain}
\bibliography{literatur2} 

\begin{thebibliography}{10}

\bibitem{AAD01}
A.~A. Abramov, A.~Aslanyan, and E.~B. Davies.
\newblock Bounds on complex eigenvalues and resonances.
\newblock {\em J. Phys. A}, 34(1):57--72, 2001.

\bibitem{BO08}
Vincent Bruneau and El~Maati Ouhabaz.
\newblock Lieb-{T}hirring estimates for non-self-adjoint {S}chr\"odinger
  operators.
\newblock {\em J. Math. Phys.}, 49(9):093504, 10, 2008.

\bibitem{RGHL04}
R.~C. Cascaval, F.~Gesztesy, H.~Holden, and Y.~Latushkin.
\newblock Spectral analysis of {D}arboux transformations for the focusing {NLS}
  hierarchy.
\newblock {\em J. Anal. Math.}, 93:139--197, 2004.

\bibitem{CueDiag}
J.~Cuenin.
\newblock Block-diagonalization of operators with gaps, with applications to
  {D}irac operators.
\newblock {\em Submitted}, 2011.

\bibitem{CueTre12}
J.~Cuenin and C.~Tretter.
\newblock Perturbation of spectra and resolvent estimates.
\newblock {\em In preparation}, 2011.

\bibitem{DaNa02}
E.~B. Davies and J.~Nath.
\newblock Schr\"odinger operators with slowly decaying potentials.
\newblock {\em J. Comput. Appl. Math.}, 148(1):1--28, 2002.
\newblock On the occasion of the 65th birthday of Professor Michael Eastham.

\bibitem{DS3}
N.~Dunford and J.~T. Schwartz.
\newblock {\em Linear operators. {P}art {III}}.
\newblock Wiley Classics Library. John Wiley \& Sons Inc., New York, 1988.
\newblock Spectral operators, With the assistance of William G. Bade and Robert
  G. Bartle, Reprint of the 1971 original, A Wiley-Interscience Publication.

\bibitem{DS1}
Nelson Dunford and Jacob~T. Schwartz.
\newblock {\em Linear operators. {P}art {I}}.
\newblock Wiley Classics Library. John Wiley \& Sons Inc., New York, 1988.
\newblock General theory, With the assistance of William G. Bade and Robert G.
  Bartle, Reprint of the 1958 original, A Wiley-Interscience Publication.

\bibitem{EE}
D.~E. Edmunds and W.~D. Evans.
\newblock {\em Spectral theory and differential operators}.
\newblock Oxford Mathematical Monographs. The Clarendon Press Oxford University
  Press, New York, 1987.
\newblock Oxford Science Publications.

\bibitem{Frank10}
R.~L. {Frank}.
\newblock {Eigenvalue bounds for Schr\"odinger operators with complex
  potentials}.
\newblock {\em ArXiv e-prints}, may 2010.

\bibitem{FrLaLiSe06}
R.~L. Frank, .~Laptev, E.~H. Lieb, and R.~Seiringer.
\newblock Lieb-{T}hirring inequalities for {S}chr\"odinger operators with
  complex-valued potentials.
\newblock {\em Lett. Math. Phys.}, 77(3):309--316, 2006.

\bibitem{GLMZ05}
F.~Gesztesy, Y.~Latushkin, M.~Mitrea, and M.~Zinchenko.
\newblock Nonselfadjoint operators, infinite determinants, and some
  applications.
\newblock {\em Russ. J. Math. Phys.}, 12(4):443--471, 2005.

\bibitem{GGK1}
I.~Gohberg, S.~Goldberg, and M.~A. Kaashoek.
\newblock {\em Classes of linear operators. {V}ol. {I}}, volume~49 of {\em
  Operator Theory: Advances and Applications}.
\newblock Birkh\"auser Verlag, Basel, 1990.

\bibitem{Ka65}
T.~Kato.
\newblock Wave operators and similarity for some non-selfadjoint operators.
\newblock {\em Math. Ann.}, 162:258--279, 1965/1966.

\bibitem{Ka}
T.~Kato.
\newblock {\em Perturbation theory for linear operators}.
\newblock Die Grundlehren der mathematischen Wissenschaften, Band 132.
  Springer-Verlag New York, Inc., New York, 1966.

\bibitem{Ka83}
T.~Kato.
\newblock Holomorphic families of {D}irac operators.
\newblock {\em Math. Z.}, 183(3):399--406, 1983.

\bibitem{LT}
H.~Langer and C.~Tretter.
\newblock Diagonalization of certain block operator matrices and applications
  to {D}irac operators.
\newblock In {\em Operator theory and analysis (Amsterdam, 1997)}, volume 122
  of {\em Oper. Theory Adv. Appl.}, pages 331--358. Birkh\"auser, Basel, 2001.

\bibitem{LaSa09}
A.~Laptev and O.~Safronov.
\newblock Eigenvalue estimates for {S}chr\"odinger operators with complex
  potentials.
\newblock {\em Comm. Math. Phys.}, 292(1):29--54, 2009.

\bibitem{RS4}
Michael Reed and Barry Simon.
\newblock {\em Methods of modern mathematical physics. {IV}. {A}nalysis of
  operators}.
\newblock Academic Press [Harcourt Brace Jovanovich Publishers], New York,
  1978.

\bibitem{Sa10}
O.~Safronov.
\newblock Estimates for eigenvalues of the {S}chr\"odinger operator with a
  complex potential.
\newblock {\em Bull. Lond. Math. Soc.}, 42(3):452--456, 2010.

\bibitem{Seba88}
P.~{\v{S}}eba.
\newblock The complex scaling method for {D}irac resonances.
\newblock {\em Lett. Math. Phys.}, 16(1):51--59, 1988.

\bibitem{SZ07}
Johannes Sj{\"o}strand and Maciej Zworski.
\newblock Fractal upper bounds on the density of semiclassical resonances.
\newblock {\em Duke Math. J.}, 137(3):381--459, 2007.

\bibitem{Syroid83}
I.-P.~P. Syroid.
\newblock Nonselfadjoint perturbation of the continuous spectrum of the {D}irac
  operator.
\newblock {\em Ukrain. Mat. Zh.}, 35(1):115--119, 137, 1983.

\bibitem{Syroid87}
I.-P.~P. Syroid.
\newblock The nonselfadjoint one-dimensional {D}irac operator on the whole
  axis.
\newblock {\em Mat. Metody i Fiz.-Mekh. Polya}, (25):3--7, 101, 1987.

\bibitem{Th}
B.~Thaller.
\newblock {\em The {D}irac equation}.
\newblock Texts and Monographs in Physics. Springer-Verlag, Berlin, 1992.

\bibitem{CT}
C.~Tretter.
\newblock {\em Spectral theory of block operator matrices and applications}.
\newblock Imperial College Press, London, 2008.

\bibitem{We73}
R.~A. Weder.
\newblock Spectral properties of the {D}irac {H}amiltonian.
\newblock {\em Ann. Soc. Sci. Bruxelles S\'er. I}, 87:341--355, 1973.

\bibitem{Weid2}
J.~Weidmann.
\newblock {\em Lineare {O}peratoren in {H}ilbertr\"aumen. {T}eil {II}}.
\newblock Mathematische Leitf\"aden. [Mathematical Textbooks]. B. G. Teubner,
  Stuttgart, 2003.
\newblock Anwendungen. [Applications].

\bibitem{Z02}
M.~Zworski.
\newblock Quantum resonances and partial differential equations.
\newblock In {\em Proceedings of the {I}nternational {C}ongress of
  {M}athematicians, {V}ol. {III} ({B}eijing, 2002)}, pages 243--252, Beijing,
  2002. Higher Ed. Press.

\end{thebibliography}

\end{document}